\documentclass[11pt]{amsart}

\usepackage[english]{babel}
\usepackage[utf8]{inputenc}
\usepackage[T1]{fontenc}
\usepackage{lmodern}
\usepackage{microtype}
\usepackage{amsmath,amssymb,mathtools}
\usepackage[dvipsnames]{xcolor}
\usepackage{hyperref}
\usepackage{tikz}
\usepackage{todonotes}
\usepackage{aliascnt}
\usepackage[nameinlink,capitalize]{cleveref}

\hypersetup{
  colorlinks=true,
  linkcolor=MidnightBlue,
  citecolor=ForestGreen,
  urlcolor=MidnightBlue,
  pdftitle={Formal Squares over the Unit Square Separate FS-Domains from RB-Domains},
  pdfauthor={Marco Abbadini},
  pdfsubject={Domain theory: FS-domains and RB-domains},
  pdfkeywords={domain theory, FS-domain, RB-domain, formal balls, formal squares, bounded variation}
}

\newtheorem{theorem}{Theorem}[section]

\newaliascnt{proposition}{theorem}
\newtheorem{proposition}[proposition]{Proposition}
\aliascntresetthe{proposition}

\newaliascnt{lemma}{theorem}
\newtheorem{lemma}[lemma]{Lemma}
\aliascntresetthe{lemma}

\newaliascnt{corollary}{theorem}
\newtheorem{corollary}[corollary]{Corollary}
\aliascntresetthe{corollary}

\newaliascnt{claim}{theorem}

\aliascntresetthe{claim}

\theoremstyle{definition}
\newaliascnt{definition}{theorem}
\newtheorem{definition}[definition]{Definition}
\aliascntresetthe{definition}

\newaliascnt{notation}{theorem}

\aliascntresetthe{notation}

\newaliascnt{example}{theorem}

\aliascntresetthe{example}

\theoremstyle{remark}
\newaliascnt{remark}{theorem}
\newtheorem{remark}[remark]{Remark}
\aliascntresetthe{remark}

\crefname{theorem}{Theorem}{Theorems}
\Crefname{theorem}{Theorem}{Theorems}
\crefname{proposition}{Proposition}{Propositions}
\Crefname{proposition}{Proposition}{Propositions}
\crefname{lemma}{Lemma}{Lemmas}
\Crefname{lemma}{Lemma}{Lemmas}
\crefname{corollary}{Corollary}{Corollaries}
\Crefname{corollary}{Corollary}{Corollaries}
\crefname{claim}{Claim}{Claims}
\Crefname{claim}{Claim}{Claims}
\crefname{definition}{Definition}{Definitions}
\Crefname{definition}{Definition}{Definitions}
\crefname{remark}{Remark}{Remarks}
\Crefname{remark}{Remark}{Remarks}
\crefname{appendix}{Appendix}{Appendices}
\Crefname{appendix}{Appendix}{Appendices}

\newcommand{\R}{\mathbb{R}}
\newcommand{\Z}{\mathbb{Z}}
\newcommand{\eps}{\varepsilon}
\newcommand{\id}{\mathrm{id}}
\newcommand{\dd}{\,d}
\newcommand{\one}{\mathbf{1}}
\newcommand{\Var}{\operatorname{Var}}
\newcommand{\Sq}{\mathbf{Sq}}
\newcommand{\Sqbot}{\mathbf{Sq}_{\bot}}
\newcommand{\abs}[1]{\left\lvert #1\right\rvert}

\newcommand{\Cen}{\mathrm{C}}
\newcommand{\Rad}{\mathrm{R}}
\newcommand{\exc}{\operatorname{exc}}
\newcommand{\domle}{\mathrel{\sqsubseteq}}

\usetikzlibrary{calc}

\title[Formal squares over the unit square]
{Formal squares over the unit square separate FS-domains from RB-domains}
\author{Marco Abbadini}

\begin{document}

\begin{abstract}
Every RB-domain is an FS-domain.  Whether the converse holds was a
long-standing open problem.
We prove that the domain of closed axis-parallel squares in the plane
whose centres lie in the unit square \([0,1]^2\), with the whole plane
adjoined and ordered by reverse inclusion, is an FS-domain but not an
RB-domain.

The proof is quantitative. 
A finite-grid argument
first shows that, on every finite slab $[0,1]^2\times[0,m]$, for every approximate identity and every $\varepsilon>0$, one member of the approximate
identity has radius excess (i.e., the output radius minus the input radius) uniformly at most $\varepsilon$.
By contrast, for every deflation and
every $m>0$, the radius excess is at least
$m/(4m+1)$
at some point of the slab $[0,1]^2\times[0,m]$.

This solution was obtained independently of the recent work of Chen, Kou, and Lyu and uses a different method.
\end{abstract}

\maketitle

\section{Introduction}

Every RB-domain is an FS-domain.  Whether the converse holds was a
long-standing open problem.
Positive answers were known for several
special classes; see, for example, \cite{ZouLiGuo2018}.  In his original
paper on FS-domains, Jung presented the domain of closed Euclidean discs,
with the whole plane adjoined as a least element and ordered by reverse
inclusion, as a concrete test case.  He attributed the example to Jimmie
Lawson and left open whether it is a retract of an SFP-object
(i.e., a retract of a countably based bifinite domain)
\cite[Section~4]{Jung1990}; the same example and open question were later
recorded in \cite[Section~4.2.2]{AbramskyJung1994}.  Lawson subsequently proved that, for every compact metric space \(X\),
the poset of closed formal balls with a bottom adjoined is an FS-domain
\cite[Proposition~1]{Lawson2008}.

Our main result is the following.

\begin{theorem}[Main theorem]\label{thm:main-intro}
For the metric space $[0,1]^2$ with the $\ell^\infty$ metric, the FS-domain of closed formal balls with a bottom adjoined is not an RB-domain.
\end{theorem}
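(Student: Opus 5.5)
We argue by contradiction: assuming the domain $\Sqbot$ is an RB-domain, we derive a bound on deflations that is incompatible with the way finite-image maps must act on the slab. Elements of $\Sqbot$ other than $\bot$ are pairs $(c,r)$ with centre $c\in[0,1]^2$ and radius $r\ge 0$; they stand for the closed $\ell^\infty$-ball (an axis-parallel square) of radius $r$ about $c$. The order is $(c,r)\domle(c',r')$ iff $\|c-c'\|_\infty\le r-r'$, i.e.\ reverse inclusion of squares. For $d\domle x$ we write the radius excess as $\exc(d,x)=\Rad(d)-\Rad(x)$, taking $\Rad(\bot)=\infty$.

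If $\Sqbot$ is an RB-domain, there is a directed family of deflations $d_i$ with $\sup_i d_i=\id$. Each $d_i$ is Scott-continuous, satisfies $d_i\le\id$, and has finite image. Because $\sup_i d_i=\id$ pointwise, for each point $x$ of the slab $[0,1]^2\times[0,m]$ the excess $\exc(d_i(x),x)$ tends to $0$ along $i$. We aim to contradict this by showing that every deflation $d$ has excess at least a fixed positive constant $\delta(m)$ somewhere on the slab. Pointwise convergence alone is not enough for this, since a failure at one point for each $d$ does not rule out convergence. The remedy is to use directedness together with finiteness of the image: one deflation $d$ eventually dominates any finitely many points' behaviour. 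So it suffices to pick a finite set $F$ of points in the slab, depending only on $m$, such that every deflation has excess at least $\delta$ at some point of $F$. We then pick $i$ with $d_i$ above, at each point of $F$, something within $\delta/2$ of the identity; this contradicts the bound. Since the bound must hold at some point of $F$, the witness has to be localised to a finite configuration, and we prove the lower bound in that form.

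For the lower bound, let $d$ be a deflation. Its image is a finite set of squares $Q_1,\dots,Q_k$ (possibly including $\bot$), and each $x$ is sent to some $Q_j\domle x$, meaning $Q_j$ contains the square $x$. Consider a point $x=(c,r)$ with small radius $r$ and with $\exc(d(x),x)<\delta$. Then $d(x)$ is a square containing $x$ whose radius is less than $r+\delta$. Since $d$ is idempotent, a square $Q$ in the image satisfies $d(Q)=Q$. Monotonicity then gives $Q\domle d(y)$ for every $y$ with $Q\domle y$, so $Q$ lies below the images of all squares it contains. Now move $x$ continuously across $[0,1]^2$ at a fixed radius $r$. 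By finiteness, the assigned image square must jump, and at a jump point Scott continuity forces the image to be a common lower bound of the neighbouring choices. A common lower bound of two squares whose centres differ by $\ell$ has radius at least roughly half of $\ell$ plus their radii. Playing this off along a path in the slab, including points of varying radius up to $m$, gives the quantitative bound $\exc\ge m/(4m+1)$ at some point. The finite set $F$ can be taken to be a sufficiently fine grid, because the excess function is controlled by Lipschitz-type estimates for monotone maps on $\Sqbot$.

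The main obstacle is this lower-bound combinatorics. It has to exploit the two-dimensional $\ell^\infty$ geometry, since in one dimension formal balls do form an RB-domain. Our first attempt would be a covering or topological argument. Each image square with radius less than $r+\delta$ captures only the points within $\delta$ of its centre, in a region shaped like a cone. Finitely many such regions must cover the grid at every radius level. Idempotence and monotonicity force consistency between levels, and a counting argument over levels from $0$ to $m$ then produces the constant $m/(4m+1)$.
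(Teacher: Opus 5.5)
There is a genuine gap. Your reduction to finitely many test points is sound and is essentially \cref{thm:uniform-slab-approximation}. The estimate needed there is not Lipschitz, since finite-image maps jump. It is the one-sided bound: if $(\mathbf{c}_0,r_0)\domle(\mathbf{c},r)$, then the excess at $(\mathbf{c},r)$ is at most the excess at $(\mathbf{c}_0,r_0)$ plus $r_0-r$, so the grid should sit slightly deeper than the slab. Your remark that the image of a point lies below the images of nearby points at the same depth is \cref{lem:local-order}.

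The gap is the lower bound for a single deflation, which is the entire content of the non-RB assertion. First, it starts from a false premise. The deflations in \cref{prop:RB-char} are not idempotent, and on $\Sqbot$ no finite square can be a fixed point of one. If $x=d(x)\domle\sup A$ with $A$ directed, then $x\domle\sup d(A)=d(a)\domle a$ for some $a\in A$, because $d(A)$ is a finite directed set. So fixed points of deflations are compact, and $\bot$ is the only compact element of $\Sqbot$. Hence ``$d(Q)=Q$'' fails for every finite square in the image, and so does its consequence that $Q$ lies below the images of all squares it contains.

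Second, even without idempotence, ``playing this off along a path'' and ``a counting argument over levels'' do not constitute an argument, and the constant $m/(4m+1)$ is asserted rather than derived. Along a single line in a slice, the local constraint \eqref{eq:local-centre-radius} only forces the output radius to have total variation about $1-2\delta$. Feeding that into $\rho_{r+\eta}\geq M_\eta\rho_r$ gives depth-growth of slope about $1$. That is exactly the identity's slope, so no positive excess follows; this matches your own remark that the one-dimensional analogue is RB.

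The missing idea is to average over the whole slice and use both directions at once. Put $A(r)=\int_{[0,1]^2}\rho_r$.
\begin{itemize}
\item Boundary control and \cref{lem:one-dimensional-domination} give $\Var(\rho_r(\cdot,y))\geq1-2\delta$ and $\Var(\rho_r(x,\cdot))\geq1-2\delta$ for all $x,y$.
\item The anisotropic Minkowski-slicing estimate (\cref{prop:minkowski-slicing}), together with coarea, turns their sum into $\liminf_{\eta\downarrow0}(A(r+\eta)-A(r))/\eta\geq2(1-2\delta)$.
\item Integrating over $[0,m]$ gives average excess at least $m(1-4\delta)$ at depth $m$, and taking $\delta=m/(4m+1)$ closes the argument.
\end{itemize}
The slope $2$, set against the identity's slope $1$, is where two-dimensionality enters, and nothing in your sketch supplies it.
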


The FS assertion follows from Lawson's theorem on formal-ball domains.
For the non-RB assertion, we first show that every approximate identity
contains a member which is uniformly close to the identity on each finite
slab.  We then prove, using a Minkowski-content slicing estimate, that every deflation has radius excess at least $m/(4m+1)$
somewhere on a slab of height $m$.  The two estimates are incompatible.

\begin{remark}[Comparison with recent work]
\label{rem:note-added}
The solution presented in this manuscript was obtained on 30 April 2026, with assistance from ChatGPT, as described in the Acknowledgments.

While I was subsequently revising the argument and preparing the manuscript
for publication, an independent solution to the problem of whether every FS-domain is an RB-domain appeared on
arXiv on 1 July 2026 \cite{ChenKouLyu2026}.  There, Chen, Kou, and Lyu proved that Lawson's planar
closed-disc domain is not an RB-domain, thereby
giving a negative answer to the long-standing question of
whether every FS-domain is an RB-domain.  I am very pleased to acknowledge
their work and to congratulate them on their solution.
I also mention that, on the next day, Chen's preprint \cite{ChenCone2026} exhibited that various other similar FS-domains are not RB-domains.

The proof developed here arose independently and follows a different method.  Since the core mathematical argument had been completed before the two preprints
\cite{ChenKouLyu2026,ChenCone2026} appeared, I hope that its
distinct perspective and techniques provide a useful complement to
those works.
\end{remark}

\section{Domain-theoretic setup and formal squares}\label{sec:domain}

\subsection{The notions used in the proof}

We recall only the required definitions.  Standard references are
\cite{AbramskyJung1994,GierzHofmannEtAl2003,Jung1990}.

\begin{definition}
A subset $A$ of a poset is \emph{directed} if every
finite subset of $A$ has an upper bound in $A$.  A \emph{dcpo} is a poset
in which every directed subset has a supremum.  It is \emph{pointed} if
it has a least element.
\end{definition}

\begin{definition}
Let $D$ be a dcpo.  For $x,y\in D$, one writes $x\ll y$ if, whenever
$A\subseteq D$ is directed and $y\leq\sup A$, there is $a\in A$ with
$x\leq a$.  The dcpo $D$ is \emph{continuous} if, for every $y\in D$,
the set $\{x \in D \mid x\ll y\}$ is directed and has supremum $y$.  A
continuous dcpo is also called a \emph{domain}.
\end{definition}

\begin{definition}
A map between dcpos is \emph{Scott-continuous} if it is monotone and
preserves directed suprema.  We write $[D\to E]$ for the set of
Scott-continuous maps from $D$ to $E$, ordered pointwise.  Directed
suprema in $[D\to E]$ are computed pointwise.
\end{definition}

\begin{definition}
Let $D$ be a pointed dcpo.
\begin{enumerate}
\item An \emph{approximate identity} on $D$ is a directed family
$\mathcal A\subseteq[D\to D]$ such that $\sup\mathcal A=\id_D$.

\item A Scott-continuous map $f \colon D\to D$ is \emph{finitely separated
from the identity} if there is a finite set $M\subseteq D$ such that,
for every $x\in D$, some $m\in M$ satisfies
\[
  f(x)\leq m\leq x.
\]

\item A \emph{deflation} is a Scott-continuous map
$\sigma \colon D\to D$ with finite image and $\sigma(x)\leq x$ for every
$x\in D$.
\end{enumerate}
\end{definition}

\begin{definition}
An \emph{FS-domain} is a pointed dcpo carrying an approximate identity
of maps finitely separated from the identity.
\end{definition}

An element $k$ of a domain is \emph{compact} if $k\ll k$, and a domain
is \emph{algebraic} if every element is the directed supremum of the
compact elements below it.  A \emph{bifinite domain} is an algebraic
FS-domain.  An \emph{RB-domain} (``retract of bifinite'') is a dcpo
isomorphic to a Scott-continuous retract of a bifinite domain.

Every deflation is finitely separated from the identity: take its finite
image as a separating set.  We use the following standard characterization
of RB-domains.

\begin{proposition}[Characterization of RB-domains]\label{prop:RB-char}
A pointed dcpo is an RB-domain if and only if it carries an approximate
identity consisting of deflations.
\end{proposition}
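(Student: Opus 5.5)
We prove the two directions separately, working with a fixed pointed dcpo $D$.

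\emph{($\Rightarrow$)} Suppose $r\colon B\to D$ and $s\colon D\to B$ are Scott-continuous with $r\circ s=\id_D$, where $B$ is bifinite. First we show that $B$ carries an approximate identity of deflations. For this we use the standard fact that in an algebraic FS-domain the finitely separated maps can be replaced by idempotent deflations. Given a finitely separated $f\ll\id_B$ with separating set $M$, we may enlarge $M$ to a finite set of compact elements. We then close it under the relevant finite operations, using the fact that for bifinite domains the finite sets $\mathrm{mub}$-closed in the sense of Plotkin exist. The map $p_M(x)=\max\{k\in M\mid k\le x\}$ is a deflation, and these $p_M$ form a directed family with supremum $\id_B$. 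Alternatively, we can simply take as definition of bifinite the existence of such an approximate identity $(p_i)$ of deflations on $B$ and cite the standard equivalence. Next we set $\sigma_i=r\circ p_i\circ s$. Each $\sigma_i$ is Scott-continuous and its image is contained in $r(p_i(B))$, which is finite. Moreover $\sigma_i(x)=r(p_i(s(x)))\le r(s(x))=x$. The family $(\sigma_i)$ is directed because $(p_i)$ is and composition is monotone. Finally, since $r$ preserves directed suprema, we get $\sup_i\sigma_i(x)=r(\sup_i p_i(s(x)))=r(s(x))=x$. So $(\sigma_i)$ is an approximate identity of deflations on $D$.

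\emph{($\Leftarrow$)} Let $(\sigma_i)_{i\in I}$ be an approximate identity of deflations on $D$. The plan is to embed $D$ as a retract of the ideal completion of a suitable abstract basis of finite-image data. Take $B$ to be the bifinite domain obtained as the bilimit (colimit of embedding-projection pairs) of the finite posets $\sigma_i(D)$. These posets need not be nested, so we instead use the standard construction from \cite{AbramskyJung1994}: let $K=\bigcup_i\sigma_i(D)$, order it by the order of $D$, and form the ideal completion of $K$ with respect to the relation $a\prec b$ iff $a\le\sigma_i(b)$ for some $i$. Each deflation $\sigma_i$ then induces a finite-image idempotent-like map on this completion, which makes it algebraic and FS, hence bifinite. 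The retraction sends an ideal to its supremum in $D$, and the section sends $x$ to $\{a\in K\mid a\le\sigma_i(x)\text{ for some }i\}$.

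The main obstacle is precisely this ($\Leftarrow$) direction. The elements $\sigma_i(x)$ need not be compact in $D$, and the $\sigma_i$ need not be idempotent, so one must check that the constructed $B$ is genuinely algebraic with a finitely separated approximate identity. Because this is the classical result of Jung \cite{Jung1990} (see also \cite[Theorem~4.2.x]{AbramskyJung1994}), I would cite it rather than reprove it, and spell out only the easy ($\Rightarrow$) direction, which is the one actually used in the main theorem.
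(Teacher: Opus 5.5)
Your proposal is correct. The paper's proof of this proposition is only a citation of the standard characterization (\cite[Exercise 4.3.11(9)(a)]{AbramskyJung1994}, \cite[Lemma 2.4]{ZouLiGuo2018}) for both directions. Your approach differs in that it gives an explicit argument for $(\Rightarrow)$: you transport an approximate identity $(p_i)$ of deflations on the bifinite domain $B$ along the retraction as $\sigma_i=r\circ p_i\circ s$. Your checks of finite image, $\sigma_i\le\id_D$, directedness, and $\sup_i\sigma_i=\id_D$ via Scott continuity of $r$ are complete. This is also the only direction the paper uses later, in \cref{thm:not-RB} and in the remark that RB-domains are FS-domains.

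There are two caveats.

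\begin{itemize}
\item The paper defines a bifinite domain as an algebraic FS-domain. Under that definition, the existence of the $p_i$ is itself the nontrivial equivalence with Plotkin-style bifiniteness, namely that mub-closures of finite sets of compact elements are finite. Your sketch via a finitely separated $f$ and ``enlarging'' its separating set to compact elements does not establish this. The finiteness of the mub-closure is exactly what needs proof, and $f$ plays no role in it. Your fallback of citing the equivalence is the right move.

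\item Your sketch of $(\Leftarrow)$ would not work as stated. The relation $a\prec b\iff a\le\sigma_i(b)$ for some $i$ is generally not reflexive. For example, take $D=[0,1]$ with $\sigma_n(x)=\max\bigl(\{0\}\cup\{k/n\mid k\in\Z,\ k/n<x\}\bigr)$. Then $K=\mathbb{Q}\cap[0,1)$, and $a\prec b$ coincides with $a<b$ whenever $b>0$. The ideal completion is therefore again isomorphic to $[0,1]$, which is not algebraic, let alone bifinite. Deferring this direction to the literature, as both you and the paper do, is the correct call.
\end{itemize}
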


\begin{proof}
This is the standard characterization of retracts of bifinite domains;
see \cite[Exercise 4.3.11(9)(a)]{AbramskyJung1994} or
\cite[Lemma 2.4]{ZouLiGuo2018}.
\end{proof}

It follows at once that every RB-domain is an FS-domain.

\subsection{The formal-square domain}

On \(\R^2\), let
\[
  d_\infty((x,y),(x',y'))
  =\max\{\abs{x-x'},\abs{y-y'}\}.
\]
For $\mathbf{c}\in[0,1]^2$ and $r\geq0$, let
\[
  S(\mathbf{c},r)=\mathbf{c}+[-r,r]^2
\]
be the closed axis-parallel square of centre $\mathbf{c}$ and radius $r$.  Set
\[
  \Sq=[0,1]^2\times[0,\infty).
\]
Only the centre is constrained to \([0,1]^2\): the square \(S(\mathbf{c},r)\) is viewed
in the ambient plane and may extend outside \([0,1]^2\).
We use the symbol $\domle$ for the domain order, in order not to confuse
it with the usual order on radii:
\begin{equation}\label{eq:square-order}
  (\mathbf{c},r)\domle(\mathbf{c}',r')
  \quad\Longleftrightarrow\quad
  d_\infty(\mathbf{c},\mathbf{c}')\leq r-r'.
\end{equation}
Equivalently,
\[
  (\mathbf{c},r)\domle(\mathbf{c}',r')
  \quad\Longleftrightarrow\quad
  S(\mathbf{c},r)\supseteq S(\mathbf{c}',r').
\]
Thus the domain order is reverse inclusion.  Let
\[
  \Sqbot=\Sq\cup\{\bot\},
\]
where $\bot$ is a new least element, geometrically interpreted as the
whole plane.

\begin{remark}[The downward-skyscraper picture]
We represent the vertical coordinate \(t\in[0,\infty)\) on an axis whose
positive direction points downwards, and identify an element $(\mathbf{c},r)\in\Sq$ with the point
$(\mathbf{c},t)$ at depth $t=r$.
Thus the finite part of \(\Sqbot\) is represented by the downward infinite
prism \([0,1]^2\times[0,\infty)\).  Under this identification,
\[
  (\mathbf{c},t)\domle(\mathbf{c}',t')
  \quad\Longleftrightarrow\quad
  t\geq t'\ \text{ and }\ d_\infty(\mathbf{c},\mathbf{c}')\leq t-t'.
\]
One may
therefore picture the domain as a
skyscraper hanging downwards from the roof \([0,1]^2\times\{0\}\): the level at
depth \(r\) is the fixed-radius slice \([0,1]^2\times\{r\}\), and \(\bot\) lies
below the whole skyscraper.
\end{remark}

\begin{figure}[ht]
  \centering
  \begin{tikzpicture}[
    x={(5.0cm,0cm)},
    y={(1.15cm,0.82cm)},
    z={(0cm,-8.8cm)},
    line join=round,
    line cap=round
  ]
    %
    \def\vx{0.36}
    \def\vy{0.55}
    \def\rv{0.23}
    \def\depth{0.40}

    \coordinate (T00) at (0,0,0);
    \coordinate (T10) at (1,0,0);
    \coordinate (T11) at (1,1,0);
    \coordinate (T01) at (0,1,0);

    \coordinate (B00) at (0,0,\depth);
    \coordinate (B10) at (1,0,\depth);
    \coordinate (B11) at (1,1,\depth);
    \coordinate (B01) at (0,1,\depth);

    \coordinate (V) at (\vx,\vy,\rv);

    \coordinate (U00) at ({\vx-\rv},{\vy-\rv},0);
    \coordinate (U10) at ({\vx+\rv},{\vy-\rv},0);
    \coordinate (U11) at ({\vx+\rv},{\vy+\rv},0);
    \coordinate (U01) at ({\vx-\rv},{\vy+\rv},0);

    \fill[gray!10]
      (T00)--(T10)--(T11)--(T01)--cycle;

    \draw[gray!55,dashed]
      (B00)--(B10)--(B11)--(B01)--cycle;

    \draw[gray!55]
      (T00)--(B00)
      (T10)--(B10)
      (T11)--(B11)
      (T01)--(B01);

    %

    \fill[MidnightBlue!7]
      (V)--(U01)--(U11)--cycle;

    \fill[MidnightBlue!11]
      (V)--(U00)--(U01)--cycle;

    \fill[MidnightBlue!15]
      (V)--(U10)--(U11)--cycle;

    \fill[MidnightBlue!20]
      (V)--(U00)--(U10)--cycle;

    \fill[MidnightBlue!13]
      (U00)--(U10)--(U11)--(U01)--cycle;

    \draw[MidnightBlue!65,dashed]
      (V)--(U01)
      (V)--(U11);

    \draw[MidnightBlue,thick]
      (V)--(U00)
      (V)--(U10)
      (U00)--(U10)--(U11)--(U01)--cycle;

    \draw[gray!75,thick]
      (T00)--(T10)--(T11)--(T01)--cycle;

    \fill[MidnightBlue] (V) circle[radius=1.6pt];

    \node[MidnightBlue,below right=1pt] at (V)
      {$\mathbf{v}$};

    \node[MidnightBlue] at (0.43,0.43,0.085)
      {$\mathord{\uparrow}\mathbf{v}$};

    \node at (0.5,0.5,-0.115)
      {$[0,1]^2\times\{0\}$};

    \draw[->,thick]
      (-0.16,0,-0.015)--(-0.16,0,\depth)
      node[midway,left=5pt] {$t$};

    \node at (0.5,0.5,0.48) {$\vdots$};
    \node at (0.5,0.5,0.55) {$\bot$};
  \end{tikzpicture}

  \caption{The formal-square domain as a downward skyscraper.
  The shaded square cone is the principal upset
  \(\mathord{\uparrow}\mathbf{v}\) of the point \(\mathbf{v}\).}
\end{figure}

For $(\mathbf{c},r)\in\Sq$, write
\[
  \Cen(\mathbf{c},r)=\mathbf{c},
  \qquad
  \Rad(\mathbf{c},r)=r;
\]
$\Cen$ stands for ``centre'', while $\Rad$ stands for ``radius''.
Set $\Rad(\bot)=\infty$; the centre of $\bot$ is left undefined.  We call
the elements of $\Sq$ \emph{finite squares}, meaning that they are not $\bot$.

\begin{remark}[The radius is order-reversing]\label{rem:radius-antitone}
For all $u,v\in\Sqbot$,
\[
  u\domle v\quad\Longrightarrow\quad \Rad(u)\geq\Rad(v),
\]
where $\Rad(\bot)=\infty$.  This simple observation is used repeatedly:
moving upward in the domain order makes the radius smaller.
\end{remark}

\begin{proposition}[The formal-square dcpo]\label{prop:way-below}
The poset $\Sqbot$ is a pointed continuous dcpo.  Its new least element
satisfies $\bot\ll u$ for every $u\in\Sqbot$, and, for finite squares,
\[
  (\mathbf{c},r)\ll(\mathbf{c}',r')
  \quad\Longleftrightarrow\quad
  d_\infty(\mathbf{c},\mathbf{c}')<r-r'.
\]
In particular, the finite squares way below a given finite square are
cofinal among all its way-below approximants.
\end{proposition}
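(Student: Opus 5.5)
We first check that $\Sqbot$ is a dcpo. Take a directed set $A\subseteq\Sqbot$. If $A\subseteq\{\bot\}$, its supremum is $\bot$, so assume $A$ contains finite squares. Discarding $\bot$ leaves a directed set $A'\subseteq\Sq$ with the same upper bounds. By \cref{rem:radius-antitone} the radii of $A'$ form a directed family in $[0,\infty)$ under the reversed order, so $r_*=\inf_{a\in A'}\Rad(a)$ exists.

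For any $a\domle b$ in $A'$ we have $d_\infty(\Cen(a),\Cen(b))\le\Rad(a)-\Rad(b)$. Hence the net of centres, indexed by $A'$, is Cauchy: given $\eps$, pick $a_0$ with $\Rad(a_0)<r_*+\eps$; every $a\in A'$ above $a_0$ has centre within $\eps$ of $\Cen(a_0)$. Let $\mathbf{c}_*\in[0,1]^2$ be the limit, which exists by completeness and closedness of $[0,1]^2$.

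We claim $(\mathbf{c}_*,r_*)$ is the supremum.
\begin{itemize}
\item It is an upper bound. For $a\in A'$ and any $b\sqsupseteq a$ in $A'$, $d_\infty(\Cen(a),\Cen(b))\le\Rad(a)-\Rad(b)$. Passing to the limit along $b$ gives $d_\infty(\Cen(a),\mathbf{c}_*)\le\Rad(a)-r_*$.
\item It is least. Let $(\mathbf{c},r)$ be any upper bound. Then $d_\infty(\Cen(a),\mathbf{c})\le\Rad(a)-r$ for all $a$, and taking limits gives $d_\infty(\mathbf{c}_*,\mathbf{c})\le r_*-r$.
\end{itemize}
Alternatively, $\bigcap_a S(a)$ is a decreasing intersection of squares, hence the square $S(\mathbf{c}_*,r_*)$. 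The space is pointed by $\bot$.

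Next we characterise $\ll$. That $\bot\ll u$ is immediate, since every directed set with a supremum is nonempty, so some element lies above $\bot$. For finite squares:

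\emph{($\Leftarrow$)} Suppose $d_\infty(\mathbf{c},\mathbf{c}')<r-r'$, and $A$ is directed with $(\mathbf{c}',r')\domle(\mathbf{c}_*,r_*)=\sup A$. Using the convergence above, choose $a\in A'$ with $\Rad(a)-r_*<\delta$ and $d_\infty(\Cen(a),\mathbf{c}_*)<\delta$. The triangle inequality gives
\[
d_\infty(\mathbf{c},\Cen(a))\le d_\infty(\mathbf{c},\mathbf{c}')+(r'-r_*)+\delta\le r-\Rad(a)
\]
once $2\delta\le r-r'-d_\infty(\mathbf{c},\mathbf{c}')$, using $\Rad(a)<r_*+\delta$. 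Thus $(\mathbf{c},r)\domle a$.

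\emph{($\Rightarrow$)} Suppose $d_\infty(\mathbf{c},\mathbf{c}')=r-r'$. Consider the chain $a_n=(\mathbf{c}',r'+1/n)$, whose supremum is $(\mathbf{c}',r')$. Here we must stay inside $[0,1]^2$, which we do by keeping the centre fixed at $\mathbf{c}'$. Then $(\mathbf{c},r)\domle a_n$ would need $d_\infty(\mathbf{c},\mathbf{c}')\le r-r'-1/n$, which fails. So $(\mathbf{c},r)\not\ll(\mathbf{c}',r')$.

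Finally, continuity. For a finite $(\mathbf{c}',r')$, the set of finite way-below approximants is $\{(\mathbf{c},r):\mathbf{c}\in[0,1]^2,\ r>r'+d_\infty(\mathbf{c},\mathbf{c}')\}$.
\begin{itemize}
\item It is nonempty and contains the chain $(\mathbf{c}',r'+1/n)$, whose supremum is $(\mathbf{c}',r')$.
\item It is directed. Given two approximants $u_1,u_2$, pick $s>r'$ small enough that $u_i\ll(\mathbf{c}',s)$ for both $i$, which is possible since the inequalities are strict. Then $(\mathbf{c}',s)$ is itself an approximant, lying above $u_1$ and $u_2$.
\end{itemize}
Adding $\bot$ does not affect directedness or the supremum, and every approximant lies below $(\mathbf{c}',r')$. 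For $\bot$ itself, $\{x: x\ll\bot\}=\{\bot\}$. The cofinality statement follows because any approximant is either $\bot$ or finite, and $\bot$ lies below every finite one.

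The main obstacle is the sup construction with the centre constraint. It is handled because $[0,1]^2$ is closed, so limits of centres remain admissible.
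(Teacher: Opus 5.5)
Your proof is correct, but it takes a different route from the paper's. The paper verifies nothing by hand. It cites the standard theorem that the formal-ball poset of a complete metric space is a continuous dcpo whose way-below relation on balls is the strict inequality $d_\infty(\mathbf{c},\mathbf{c}')<r-r'$ (Lawson; Gierz et al., Chapter V-6). It then only observes that adjoining a new least element preserves directed suprema and the way-below relation among old elements, and that $\bot\ll u$ because directed sets are nonempty.

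You instead reprove the cited theorem in this special case:
\begin{itemize}
\item directed suprema come from the Cauchy net of centres, with closedness of $[0,1]^2$ making the centre constraint harmless;
\item the implication ($\Leftarrow$) is a triangle-inequality estimate against an element of $A$ that is $\delta$-close to the supremum;
\item the implication ($\Rightarrow$) uses the chain $(\mathbf{c}',r'+1/n)$, which keeps the centre admissible;
\item directedness of the approximants comes from interpolating with $(\mathbf{c}',s)$.
\end{itemize}
Your treatment of $\bot$ (discarding it from a directed set and noting that the upper bounds do not change) is the explicit content of the paper's one-line remark about adjoining a least element.

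The paper's route is shorter and places the result in its general context. Yours is self-contained, and since it uses only completeness of the centre space, it works unchanged for any complete metric space.

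One small fix: in ($\Rightarrow$) you only treat the case $d_\infty(\mathbf{c},\mathbf{c}')=r-r'$. You should also exclude $d_\infty(\mathbf{c},\mathbf{c}')>r-r'$. Either note that $\ll$ implies $\domle$, or observe that your chain argument works verbatim whenever $d_\infty(\mathbf{c},\mathbf{c}')\geq r-r'$.
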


\begin{proof}
The formal-ball poset of a complete metric space is a continuous dcpo,
and its way-below relation between finite formal balls is the strict
formal-ball inequality displayed above; see \cite[Section 2]{Lawson2008}
or \cite[Chapter V-6]{GierzHofmannEtAl2003}.  Adjoining a new least
element preserves directed suprema and the way-below relation between
the old elements.  The new element is way below every element because
directed sets are nonempty.  These observations also prove the stated
cofinality.
\end{proof}

\begin{proposition}[Lawson]\label{prop:FS}
The pointed formal-square domain $\Sqbot$ is an FS-domain.
\end{proposition}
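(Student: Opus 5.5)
The plan is to cite Lawson's theorem \cite[Proposition~1]{Lawson2008}: for every compact metric space \(X\), the closed formal-ball poset with a bottom adjoined is an FS-domain. Here \(X=[0,1]^2\) with \(d_\infty\) is compact, and by \eqref{eq:square-order} its formal-ball poset is exactly \(\Sq\). So \(\Sqbot\) is that poset with a least element adjoined, and the claim follows. The only thing to check is that Lawson's convention for adjoining the bottom matches ours, which is immediate.

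To keep the argument self-contained, I would also sketch an explicit approximate identity. For \(\eps>0\), choose a finite \(\eps\)-net \(N_\eps\subseteq[0,1]^2\) together with a continuous partition of unity \(\{\phi_p\}_{p\in N_\eps}\) subordinate to the \(\eps\)-balls around the points of \(N_\eps\). The simplest choice is the retraction onto a grid via piecewise-linear interpolation, giving a continuous map \(\pi_\eps\colon[0,1]^2\to[0,1]^2\) with \(d_\infty(\pi_\eps(\mathbf c),\mathbf c)\le\eps\). Then define
\[
f_\eps(\mathbf c,r)=\begin{cases}(\pi_\eps(\mathbf c),\,r+2\eps)&\text{if } r<1/\eps,\\ \bot&\text{otherwise,}\end{cases}
\]
with the cutoff \(1/\eps\) smoothed so that the map stays Scott-continuous. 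Since the excess \(2\eps\) exceeds \(d_\infty(\pi_\eps\mathbf c,\mathbf c)\), we get \(f_\eps(x)\ll x\) by \cref{prop:way-below}.

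The key steps, in order, are as follows.
\begin{enumerate}
\item Show \(f_\eps\) is monotone. This is the main obstacle: if \((\mathbf c,r)\domle(\mathbf c',r')\), we need \(d_\infty(\pi_\eps\mathbf c,\pi_\eps\mathbf c')\le r-r'\), which requires \(\pi_\eps\) to be \(1\)-Lipschitz for \(d_\infty\). Coordinatewise piecewise-linear maps do not shrink distances in general. The remedy is to use instead \(f_\eps(\mathbf c,r)=(\mathbf c,\,r+\eps)\), possibly composed with a sup over a finite family. This map is an order-embedding shift, hence monotone, and Scott-continuous because directed suprema in \(\Sq\) are limits of centres with infima of radii.
\item Establish finite separation. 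Take \(M\) to be the points \((p,k\eps/2)\) with \(p\) in an \(\eps/4\)-net and \(k\eps/2\le 1/\eps+\eps\), together with \(\bot\). For \(x=(\mathbf c,r)\), choose \(p\) within \(\eps/4\) of \(\mathbf c\) and \(k\) with \(r+\eps/4\le k\eps/2\le r+3\eps/4\). Then \(f_\eps(x)\domle m\domle x\). The bottom truncation at large radius is handled by sending those \(x\) to \(\bot\), with a continuous interpolation region.
\item Check directedness and that \(\sup_\eps f_\eps=\id\), both pointwise.
\end{enumerate}

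The real subtlety is the truncation at large radius. We need \(f_\eps(x)=\bot\) for \(r\) large in order to keep \(M\) finite, while preserving monotonicity and continuity. A simple shift cannot jump to \(\bot\) monotonically, since increasing the radius moves \(x\) downward and \(\bot\) is below everything. Defining \(f_\eps(\mathbf c,r)=\bot\) for \(r\ge R\) is therefore monotone: smaller elements have larger \(r\), so they are sent to \(\bot\) first.

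Scott-continuity at a directed supremum requires the set \(\{r<R\}\) to be Scott-open. It is, being an up-set that is inaccessible by directed sups, because radii of a directed set converge down to the radius of the sup. So the map is continuous without smoothing, and the plan reduces to Lawson's result or to this direct verification.
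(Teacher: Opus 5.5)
Your primary argument is correct and is exactly the paper's proof: apply Lawson's theorem to the compact space $([0,1]^2,d_\infty)$ and identify its formal-ball poset with $\Sq$ via \eqref{eq:square-order}.

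Your self-contained sketch takes a genuinely different, direct route. Once the false starts are removed, it is also correct. Take $f_\eps(\mathbf c,r)=(\mathbf c,r+\eps)$ for $r<1/\eps$, and $f_\eps=\bot$ elsewhere. These maps are Scott-continuous: the shift is an order-embedding that commutes with directed suprema, and $\{r<1/\eps\}$ is Scott-open. They form a chain whose pointwise supremum is $\id_{\Sqbot}$. Each is finitely separated from the identity by $\bigl(N\times(\tfrac{\eps}{2}\Z\cap[0,1/\eps+\eps])\bigr)\cup\{\bot\}$, with $N$ a finite $\eps/4$-net, exactly as you check.

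The citation buys brevity, and Lawson's result covers every complete metric space whose closed bounded sets are compact. The direct version buys transparency. It shows that the FS structure is carried by uniform radius shifts with infinite image. For small $\eps$ these have radius excess exactly $\eps$ on the whole slab $[0,1]^2\times[0,m]$, which is precisely what \cref{thm:radius-obstruction} forbids for deflations.

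In a write-up, drop the following false starts:
\begin{enumerate}
\item the map $\pi_\eps$;
\item the ``smoothing'' and the ``continuous interpolation region'', since you later show no smoothing is needed;
\item the vague ``sup over a finite family'';
\item the sentence claiming that a shift cannot jump to $\bot$ monotonically, which contradicts the correct argument immediately after it.
\end{enumerate}
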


\begin{proof}
Lawson proved that, if $(X,d)$ is complete and every closed bounded set is
compact, then the pointed formal-ball domain $B_X^\bot$ is an FS-domain
\cite[Proposition 1 and Remark 2(ii)--(iii)]{Lawson2008}.  Apply this to
$X=([0,1]^2,d_\infty)$, which is compact.  Its formal-ball pairs are precisely
$[0,1]^2\times[0,\infty)$, with the order in \eqref{eq:square-order}; hence
$B_{[0,1]^2}^\bot\cong\Sqbot$.
\end{proof}

It remains to prove that $\Sqbot$ admits no approximate identity of
deflations.

\section{The main idea}\label{sec:main-idea}

We present informally the main idea of the proof.
Suppose, towards a contradiction, that $\Sqbot$ carries an approximate
identity $\mathcal A$ consisting of deflations.  The first conclusion we
shall draw is that, for every finite slab
\[
  [0,1]^2\times[0,m]
\]
and every $\varepsilon>0$, some $\sigma\in\mathcal A$ has radius excess at
most $\varepsilon$ everywhere on that slab.  The definition of an
approximate identity gives only pointwise approximation, but a finite grid
placed slightly below the slab upgrades this to uniform approximation.

A small radius excess also forces the output centre to remain close to the
input centre.  Indeed, if
\[
  \sigma(\mathbf{c},r)=(\gamma_r(\mathbf{c}),\rho_r(\mathbf{c})),
\]
then the deflation inequality gives
\[
  d_\infty(\gamma_r(\mathbf{c}),\mathbf{c})
  \leq \rho_r(\mathbf{c})-r.
\]
Thus the member of the approximate identity just chosen moves all centres
by at most $\varepsilon$ on the slab, and in particular it only slightly
shrinks its lateral boundary.

We now ask what the fact that $\sigma$ is a deflation forces in the
opposite direction.  For each \(r\in[0,\infty)\), consider the restriction of
\(\sigma\) to the fixed-depth slice $[0,1]^2\times\{r\}$:
\begin{align*}
  \sigma_{\upharpoonright r}\colon [0,1]^2
    &\longrightarrow \Sqbot,\\
  \mathbf{c}
    &\longmapsto \sigma(\mathbf{c},r).
\end{align*}
Define the output-radius function on this slice by
\begin{align*}
  \rho_r\colon [0,1]^2
    &\longrightarrow [0,\infty],\\
  \mathbf{c}
    &\longmapsto \Rad\bigl(\sigma_{\upharpoonright r}(\mathbf{c})\bigr),
\end{align*}
and consider its average value
\[
  A(r)
  =\int_{[0,1]^2}\rho_r(\mathbf{c})\dd\mathbf{c}.
\]
Since the unit square has area \(1\), this integral is indeed the average
value of the output radius on the slice.

The guiding idea is that, as long as the lateral boundary is not shrunk,
the function \(r\mapsto A(r)\) is forced to grow with right-hand slope at least \(2\).
Since
\(A(0)\geq0\), we would obtain
\[
  A(r)\geq2r.
\]
The identity map has radius exactly \(r\) everywhere on the slice at depth
\(r\), and hence average radius \(r\).  Thus \(\sigma\) would be, on
average, at least twice as deep as the identity, and therefore could not
be close to the identity.

The slope \(2\) is obtained when \(\sigma\) does not ``shrink'' the
lateral boundary of the prism.
If $\sigma$ shrinks the boundaries by a small number $\delta$, then one obtains the growth of $A$ is bounded from below by $2(1 - 2\delta)$, i.e., by a value close to $2$.

From this one concludes that an approximate identity made of deflation cannot exist; indeed, if it existed, fix $m > 0$; then there would be an element $\sigma$ of the approximate identity with small radius excess on $[0,1]^2 \times [0,m]$. This would shrink just a little the boundaries, and so the slope of $A(r)$ would be close to $2$, which forces a big radius excess.

That is the basic idea.  The main difficulty in implementing it
is showing that the average radius grows with slope close to \(2\).

Let us conclude this informal discussion with a model illustrating
the appearance of the slope \(2\).  Suppose, purely for illustration, that on the
top slice \([0,1]^2\times\{0\}\) a deflation acts as follows:
\[
\sigma((a,b),0)=
\begin{cases}
  \left(\left(\frac12,\frac12\right),\frac12\right),
    & a,b\in(0,1),\\[1ex]
  \left(\left(\frac12,b\right),1\right),
    & a\in(0,1),\ b\in\{0,1\},\\[1ex]
  \left(\left(a,\frac12\right),1\right),
    & a\in\{0,1\},\ b\in(0,1),\\[1ex]
  \left(\left(a,b\right),\frac32\right),
    & a,b\in\{0,1\}.
\end{cases}
\]
Thus the open square, the four open edges, and the four vertices form nine
strata, carrying the radii displayed in
\cref{fig:main-idea-top-slice}.

\begin{figure}[ht]
  \centering
  \begin{tikzpicture}[
    x=5.2cm,
    y=5.2cm,
    line cap=round,
    line join=round
  ]
    \fill[gray!10] (0,0) rectangle (1,1);

    \draw[MidnightBlue,line width=2.4pt]
      (0,0)--(1,0)
      (1,0)--(1,1)
      (1,1)--(0,1)
      (0,1)--(0,0);

    \draw[black,thick] (0,0) rectangle (1,1);

    \node[fill=gray!10,inner sep=1.5pt] at (0.5,0.5)
      {$\frac12$};

    \node[below=2pt] at (0.5,0) {$1$};
    \node[above=2pt] at (0.5,1) {$1$};
    \node[left=2pt]  at (0,0.5) {$1$};
    \node[right=2pt] at (1,0.5) {$1$};

    \foreach \x/\y in {0/0,1/0,1/1,0/1}
      \fill[BrickRed] (\x,\y) circle[radius=2.3pt];

    \node[below left=2pt]  at (0,0) {$\frac32$};
    \node[below right=2pt] at (1,0) {$\frac32$};
    \node[above right=2pt] at (1,1) {$\frac32$};
    \node[above left=2pt]  at (0,1) {$\frac32$};
  \end{tikzpicture}

  \caption{The prescribed output radius on the top slice.  The nine
  strata are the open square, the four open edges, and the four
  vertices.}
  \label{fig:main-idea-top-slice}
\end{figure}

This choice is compatible with some information we know about a slice of
a deflation; it is the smallest possible choice of output radii compatible
with choosing as output centre of each of the nine regions the centre of
that region.

Let
\[
  \rho_{0}(a,b)=\Rad\bigl(\sigma(a,b,0)\bigr).
\]
The four edges and the four vertices have two-dimensional Lebesgue
measure zero, and therefore the average radius on the top slice is
\[
  A(0)
  =\int_{[0,1]^2}\rho_{0}(a,b)\dd a\dd b
  =\frac12.
\]

Now consider the slice at depth \(\varepsilon\), where
\(0<\varepsilon<1/2\).  Suppose that \((a,b)\) is within
\(d_\infty\)-distance \(\varepsilon\) of a point \((c,d)\) for which
\(\sigma(c,d,0)\) has radius \(\alpha\).  Then
\[
  (a,b,\varepsilon)\domle(c,d,0).
\]
By monotonicity of \(\sigma\),
\[
  \sigma(a,b,\varepsilon)\domle\sigma(c,d,0),
\]
and, since the radius is order-reversing,
\[
  \Rad\bigl(\sigma(a,b,\varepsilon)\bigr)\geq\alpha.
\]
Thus every point on the deeper slice inherits at least the largest radius
seen within \(d_\infty\)-distance \(\varepsilon\) on the top slice.  This gives the lower bound represented in
\cref{fig:main-idea-epsilon-slice}.

\begin{figure}[ht]
  \centering
  \begin{tikzpicture}[
    x=5.4cm,
    y=5.4cm,
    line cap=round,
    line join=round
  ]
    \def\e{0.18}

    \fill[gray!10]
      (\e,\e) rectangle ({1-\e},{1-\e});

    \fill[MidnightBlue!13]
      (\e,0) rectangle ({1-\e},\e);
    \fill[MidnightBlue!13]
      (\e,{1-\e}) rectangle ({1-\e},1);
    \fill[MidnightBlue!13]
      (0,\e) rectangle (\e,{1-\e});
    \fill[MidnightBlue!13]
      ({1-\e},\e) rectangle (1,{1-\e});

    \fill[BrickRed!18] (0,0) rectangle (\e,\e);
    \fill[BrickRed!18] ({1-\e},0) rectangle (1,\e);
    \fill[BrickRed!18] ({1-\e},{1-\e}) rectangle (1,1);
    \fill[BrickRed!18] (0,{1-\e}) rectangle (\e,1);

    \draw[black,thick] (0,0) rectangle (1,1);
    \draw[gray!70]
      (\e,0)--(\e,1)
      ({1-\e},0)--({1-\e},1)
      (0,\e)--(1,\e)
      (0,{1-\e})--(1,{1-\e});

    \node at (0.5,0.5) {$\frac12$};

    \node at (0.5,{\e/2}) {$1$};
    \node at (0.5,{1-\e/2}) {$1$};
    \node at ({\e/2},0.5) {$1$};
    \node at ({1-\e/2},0.5) {$1$};

    \node[font=\scriptsize] at ({\e/2},{\e/2}) {$\frac32$};
    \node[font=\scriptsize] at ({1-\e/2},{\e/2}) {$\frac32$};
    \node[font=\scriptsize] at ({1-\e/2},{1-\e/2}) {$\frac32$};
    \node[font=\scriptsize] at ({\e/2},{1-\e/2}) {$\frac32$};

    \draw[<->] (0,-0.075)--(\e,-0.075)
      node[midway,below=1pt] {$\varepsilon$};
    \draw[<->] (\e,-0.075)--({1-\e},-0.075)
      node[midway,below=1pt] {$1-2\varepsilon$};
    \draw[<->] ({1-\e},-0.075)--(1,-0.075)
      node[midway,below=1pt] {$\varepsilon$};
  \end{tikzpicture}

  \caption{The lower bound on the output radius at depth
  \(\varepsilon\).  Each displayed number is a lower bound on the
  corresponding region.}
  \label{fig:main-idea-epsilon-slice}
\end{figure}

The average of the displayed lower bound is
\[
  \frac12(1-2\varepsilon)^2
  +4\varepsilon(1-2\varepsilon)
  +4\cdot\frac32\varepsilon^2 =\frac12+2\varepsilon.
\]
Consequently,
\[
  A(\varepsilon)\geq\frac12+2\varepsilon
  =A(0)+2\varepsilon.
\]
Subtracting \(A(0)=1/2\), dividing by \(\varepsilon\), and letting
\(\varepsilon\downarrow0\), we obtain the lower bound \(2\) for the
right-hand slope.  This is exactly the number we wanted to explain.

Notice that, in computing the increase of the displayed lower bound
over \(\rho_0\), only the regions on which that lower bound differs
from \(\rho_0\) contribute.  Roughly speaking, one may picture the square as
being divided into finitely many strata: some of dimension \(2\), some
of dimension \(1\), and some of dimension \(0\).  Distinct strata of a
given dimension meet, if at all, through lower-dimensional strata: the
one-dimensional pieces form interfaces between two-dimensional pieces,
and the zero-dimensional pieces occur where one-dimensional pieces
meet.  When a stratum touches a higher-dimensional one, its larger radius
spreads into an \(\varepsilon\)-neighbourhood and contributes to the
change of average.

The contribution of the zero-dimensional strata is of order
\(\varepsilon^2\), and is therefore negligible at first order.  The
one-dimensional strata have \(\varepsilon\)-neighbourhoods of area of
order \(\varepsilon\), and provide the first-order contribution.
For example, in our model, each of the four sides of the boundary
contributes $(1 - \frac{1}{2})\eps$, i.e., $\frac{1}{2}\eps$.  In total,
these four quantities sum to $2\eps$.
The crucial hope---which is what works---is that this contribution can be
split into a horizontal part and a vertical part, reducing the planar
problem to two one-dimensional problems.
In fact, in our example, the two horizontal sides of the square contribute
to the vertical part (with a total vertical contribution of $\eps$), and
the two vertical sides contribute to the horizontal part (with a total
horizontal contribution of $\eps$).  In each direction, the
first-order contribution is \(\varepsilon\) times a total variation.  If
the boundary is not shrunk, the relevant horizontal variation is at
least \(1\), and the relevant vertical variation is also at least \(1\).
The local order constraint transfers these variations of the
output-centre map to the radius function.  The two contributions
therefore add up to
\[
  \varepsilon+\varepsilon=2\varepsilon.
\]
This is the geometric origin of the slope \(2\).
The argument can be repeated analogously by
tessellating $[0,1]^2$ with $n \times n$ squares.
The analytic argument
below makes this heuristic rigorous, without assuming that the level
sets of an arbitrary deflation admit such a regular stratification.

\section{Uniform approximation on finite slabs}
\label{sec:uniform-slab}

We first make precise the consequence of an approximate identity described
in the preceding section.  Pointwise approximation on a suitable finite
grid will give uniform approximation on an entire finite slab.

For a function $\sigma\colon\Sqbot\to\Sqbot$ below the identity, define
the extended radius excess by
\[
  \exc_\sigma(\mathbf{c},r)=\Rad(\sigma(\mathbf{c},r))-r\in[0,\infty],
\]
with the convention $\infty-r=\infty$.  Since
$\sigma(\mathbf{c},r)\domle(\mathbf{c},r)$, \cref{rem:radius-antitone} gives
$\exc_\sigma(\mathbf{c},r)\geq0$.  Whenever the output is finite, the full order
condition also gives
\begin{equation}\label{eq:basic-center-excess}
  d_\infty\bigl(\Cen(\sigma(\mathbf{c},r)),\mathbf{c}\bigr)
  \leq \exc_\sigma(\mathbf{c},r).
\end{equation}
Thus a small (finite) radius excess automatically keeps the output centre
close to the input centre.

We begin with the direct consequence of the definition of an approximate
identity.

\begin{lemma}[Approximation on a finite test set]\label{lem:finite-test-approx}
Let $\mathcal A$ be an approximate identity on $\Sqbot$.  For every
finite set $F\subseteq\Sq$ and every $\eps>0$, there is
$\sigma\in\mathcal A$ such that, for every $(\mathbf{c},r)\in F$,
\[
  r \leq \Rad(\sigma(\mathbf{c},r))\leq r+\eps.
\]
\end{lemma}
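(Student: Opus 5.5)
The lower bound $r\leq\Rad(\sigma(\mathbf{c},r))$ holds for every $\sigma\in\mathcal A$ and every point. Indeed, $\sigma\leq\sup\mathcal A=\id$ pointwise, so $\sigma(\mathbf{c},r)\domle(\mathbf{c},r)$, and \cref{rem:radius-antitone} gives $\Rad(\sigma(\mathbf{c},r))\geq r$. The real content is therefore the upper bound. My plan is to obtain it first for each single point of $F$, and then for all of $F$ at once using directedness.

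Fix $(\mathbf{c},r)\in F$. Choose the approximant $u=(\mathbf{c},r+\eps/2)$, which lies in $\Sq$ because only the centre is constrained. By \cref{prop:way-below}, $u\ll(\mathbf{c},r)$, since $d_\infty(\mathbf{c},\mathbf{c})=0<\eps/2$. Directed suprema in $[\Sqbot\to\Sqbot]$ are computed pointwise, so the set $\{\sigma(\mathbf{c},r)\mid\sigma\in\mathcal A\}$ is directed with supremum $\id(\mathbf{c},r)=(\mathbf{c},r)$. By the definition of $\ll$, there is therefore $\sigma_{(\mathbf{c},r)}\in\mathcal A$ with $u\domle\sigma_{(\mathbf{c},r)}(\mathbf{c},r)$. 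Applying \cref{rem:radius-antitone} again gives
\[
  \Rad\bigl(\sigma_{(\mathbf{c},r)}(\mathbf{c},r)\bigr)\leq r+\eps/2<r+\eps .
\]
In particular the output is a finite square, not $\bot$. One could avoid the way-below relation and argue directly from the radius being lower semicontinuous along directed sets, but the way-below route uses only \cref{prop:way-below}.

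Next I combine the points. Since $F$ is finite and $\mathcal A$ is directed, there is $\sigma\in\mathcal A$ lying above every $\sigma_{(\mathbf{c},r)}$ for $(\mathbf{c},r)\in F$. (If $F$ is empty, any member works, since $\mathcal A$ is nonempty.) For each point of $F$, the pointwise order gives $\sigma_{(\mathbf{c},r)}(\mathbf{c},r)\domle\sigma(\mathbf{c},r)$. The radius is order-reversing, so $\Rad(\sigma(\mathbf{c},r))\leq\Rad(\sigma_{(\mathbf{c},r)}(\mathbf{c},r))\leq r+\eps$. Together with the lower bound from the first paragraph, this proves the lemma.

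The argument has no real obstacle. The only point needing care is to state explicitly that suprema in $[D\to D]$ are pointwise, so that $\sup\mathcal A=\id$ gives a directed family with supremum $(\mathbf{c},r)$ at each point.
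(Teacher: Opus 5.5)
Your proposal is correct and follows essentially the same route as the paper: a way-below approximant $(\mathbf{c},r+\eps')\ll(\mathbf{c},r)$ at each point of $F$, a single upper bound in $\mathcal A$ obtained by directedness, and the lower bound from $\sigma\leq\id_{\Sqbot}$ via \cref{rem:radius-antitone}. The only difference is cosmetic: you use $\eps/2$ and get a strict inequality, where the paper uses $\eps$ directly.
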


\begin{proof}
The case $F=\varnothing$ is immediate.  Fix $(\mathbf{c},r)\in F$.  By
\cref{prop:way-below}, $(\mathbf{c},r+\eps)\ll(\mathbf{c},r)$.  Since the
supremum of the approximate identity is computed pointwise,
\[
  (\mathbf{c},r)=\sup_{\sigma\in\mathcal A}\sigma(\mathbf{c},r),
\]
there is $\sigma_{(\mathbf{c},r)}\in\mathcal A$ with
\[
  (\mathbf{c},r+\eps)\domle\sigma_{(\mathbf{c},r)}(\mathbf{c},r).
\]
By \cref{rem:radius-antitone},
\[
  \Rad(\sigma_{(\mathbf{c},r)}(\mathbf{c},r))\leq r+\eps.
\]
The family $\mathcal A$ is directed and $F$ is finite, so there is a
single $\sigma\in\mathcal A$ above every $\sigma_{(\mathbf{c},r)}$.
Hence
\[
  \Rad(\sigma(\mathbf{c},r))
  \leq
  \Rad(\sigma_{(\mathbf{c},r)}(\mathbf{c},r))
  \leq r+\eps.
\]
Moreover, every element of $\mathcal A$ lies below its supremum, so
\[
  \sigma\leq\sup\mathcal A=\id_{\Sqbot}
\]
pointwise.  Thus $\sigma(\mathbf{c},r)\domle(\mathbf{c},r)$, and
\cref{rem:radius-antitone} gives
\[
  r\leq\Rad(\sigma(\mathbf{c},r)). \qedhere
\]
\end{proof}

The next theorem is the first of the two conclusions that will eventually
be incompatible.  Notice that it applies to an arbitrary approximate
identity; its members need not have finite image.

\begin{theorem}[Uniform approximation on finite slabs]
\label{thm:uniform-slab-approximation}
Let $\mathcal A$ be an approximate identity on $\Sqbot$.  For every
$m\in[0,\infty)$ and every $\eps>0$, there is $\sigma\in\mathcal A$ such
that, for every $(\mathbf{c},r)\in[0,1]^2\times[0,m]$,
\begin{equation}\label{eq:uniform-radius-on-slab}
  0
  \leq
  \Rad(\sigma(\mathbf{c},r))-r
  \leq
  \eps.
\end{equation}
In particular, all these outputs are finite and
\begin{equation}\label{eq:uniform-centre-on-slab}
  d_\infty\bigl(
    \Cen(\sigma(\mathbf{c},r)),
    \mathbf{c}
  \bigr)
  \leq
  \Rad(\sigma(\mathbf{c},r))-r
  \leq
  \eps
\end{equation}
throughout the slab.
\end{theorem}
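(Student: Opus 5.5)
We will reduce the slab statement to \cref{lem:finite-test-approx} by placing a finite grid slightly \emph{below} the slab, that is, at slightly larger radius, and transferring the estimate by monotonicity. Fix $m\geq0$ and $\eps>0$, and set $\eta=\eps/3$. Choose a finite $\eta$-net $N\subseteq[0,1]^2$ for $d_\infty$; a regular grid of mesh at most $\eta$ will do. Let
\[
  F=\{(\mathbf{g},k\eta)\mid \mathbf{g}\in N,\ k\in\Z,\ 1\le k\le \lceil m/\eta\rceil+2\},
\]
which is a finite subset of $\Sq$. Applying \cref{lem:finite-test-approx} to $F$ with tolerance $\eta$ yields $\sigma\in\mathcal A$ such that $\Rad(\sigma(\mathbf{g},s))\le s+\eta$ for every $(\mathbf{g},s)\in F$.

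Now take any $(\mathbf{c},r)$ with $r\in[0,m]$. Pick $\mathbf{g}\in N$ with $d_\infty(\mathbf{c},\mathbf{g})\le\eta$, and let $s$ be the smallest grid level $k\eta$ satisfying $s\ge r+\eta$. Then $s\le r+2\eta$, and $s$ lies within the range of levels in $F$ because $r+2\eta\le(\lceil m/\eta\rceil+2)\eta$. Since $d_\infty(\mathbf{g},\mathbf{c})\le\eta\le s-r$, \eqref{eq:square-order} gives $(\mathbf{g},s)\domle(\mathbf{c},r)$. By monotonicity of $\sigma$ we get $\sigma(\mathbf{g},s)\domle\sigma(\mathbf{c},r)$, and \cref{rem:radius-antitone} then gives
\[
  \Rad(\sigma(\mathbf{c},r))\le\Rad(\sigma(\mathbf{g},s))\le s+\eta\le r+3\eta=r+\eps.
\]
In particular the output is finite, since the radius of $\bot$ is $\infty$.

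For the lower bound, $\sigma\le\sup\mathcal A=\id$ gives $\sigma(\mathbf{c},r)\domle(\mathbf{c},r)$, so $\Rad(\sigma(\mathbf{c},r))\ge r$ by \cref{rem:radius-antitone}. Since the output is finite, \eqref{eq:uniform-centre-on-slab} follows directly from \eqref{eq:basic-center-excess}.

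The only delicate point is the bookkeeping that makes the grid point sit below $(\mathbf{c},r)$ in the domain order. This requires the vertical gap $s-r$ to dominate the horizontal error $\eta$ while staying at most $2\eta$, which is exactly why the grid levels are shifted to start at depth $\eta$ and run slightly past $m$. Apart from this, the argument uses only monotonicity; no continuity of $\sigma$ beyond what \cref{lem:finite-test-approx} already provides is needed.
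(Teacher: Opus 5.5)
Your proof is correct and follows essentially the same route as the paper's. The paper also sets $h=\eps/3$, uses a finite grid whose levels lie between $r+h$ and $r+2h$, and applies \cref{lem:finite-test-approx} with tolerance $h$; it then transfers the bound by monotonicity and antitonicity of the radius, and gets the lower bound from $\sigma\le\id$. Your index bookkeeping for the grid levels is sound.
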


\begin{proof}
Set $h=\eps/3$ and define
\[
  G_h
  =
  \bigl((2h\Z)\cap[0,1]\bigr)\cup\{1\},
  \qquad
  \Lambda_{m,h}
  =
  G_h^2\times\bigl(h\Z\cap[0,m+2h]\bigr).
\]
This is a finite subset of $\Sq$.  By
\cref{lem:finite-test-approx}, there is $\sigma\in\mathcal A$ such that
\begin{equation}\label{eq:grid-radius-control}
  \Rad(\sigma(\mathbf{c}_0,r_0))\leq r_0+h
  \qquad
  ((\mathbf{c}_0,r_0)\in\Lambda_{m,h}).
\end{equation}

Fix $(\mathbf{c},r)\in[0,1]^2\times[0,m]$.  Choose
$\mathbf{c}_0\in G_h^2$ with
\[
  d_\infty(\mathbf{c},\mathbf{c}_0)\leq h
\]
and choose
\[
  r_0\in h\Z\cap[r+h,r+2h].
\]
Such choices exist because consecutive points of $G_h$ are at distance at
most $2h$, while the radius lattice has spacing $h$.  Moreover,
$r_0\leq m+2h$, so $(\mathbf{c}_0,r_0)\in\Lambda_{m,h}$.  We have
\[
  d_\infty(\mathbf{c},\mathbf{c}_0)
  \leq h
  \leq r_0-r,
\]
and therefore
\[
  (\mathbf{c}_0,r_0)\domle(\mathbf{c},r).
\]
By monotonicity of $\sigma$, \cref{rem:radius-antitone}, and
\eqref{eq:grid-radius-control},
\[
  \Rad(\sigma(\mathbf{c},r))
  \leq
  \Rad(\sigma(\mathbf{c}_0,r_0))
  \leq
  r_0+h
  \leq
  r+3h
  =r+\eps.
\]
Every member of $\mathcal A$ lies below $\id_{\Sqbot}$, as observed in the
proof of \cref{lem:finite-test-approx}, and hence the radius excess is
nonnegative.  This proves \eqref{eq:uniform-radius-on-slab}.  In
particular, the outputs are finite, and \eqref{eq:basic-center-excess}
then gives \eqref{eq:uniform-centre-on-slab}.
\end{proof}

\section{Boundary preservation forces radial growth}
\label{sec:deflations}

We now turn to the opposite conclusion.  This section studies a single
deflation which is finite on a slab and whose output centres stay close
to the input centres on the lateral boundary.  We first describe the
local geometry forced by Scott continuity and finite image.

\begin{definition}[Slice at a fixed depth]\label{def:fixed-depth-slice}
Let $\sigma\colon\Sqbot\to\Sqbot$ be a function and let
$r\in[0,\infty)$.  The \emph{slice of $\sigma$ at depth $r$} is the
function
\begin{align*}
  \sigma_{\upharpoonright r}\colon [0,1]^2 &\longrightarrow \Sqbot,\\
  \mathbf{c} &\longmapsto \sigma(\mathbf{c},r).
\end{align*}
\end{definition}

\begin{lemma}[Local centre motion forces radial variation]
\label{lem:local-order}
Let $\sigma\colon\Sqbot\to\Sqbot$ be Scott-continuous with finite image.
Fix $r \in [0,\infty)$.  Then every element of $[0,1]^2$ is a local
minimum for the slice $\sigma_{\upharpoonright r}$ with respect
to the relative topology.  In fact, the following stronger statement
holds: for every $\mathbf{c}_0 \in[0,1]^2$ such that
$\sigma_{\upharpoonright r}(\mathbf{c}_0)$ is finite, there is
a relative open neighbourhood $U$ of $\mathbf{c}_0$ such that, for all $\mathbf{c} \in U$,
$\sigma_{\upharpoonright r}(\mathbf{c})$ is finite and
\begin{equation}\label{eq:local-centre-radius}
  d_\infty\bigl(
    \Cen(\sigma_{\upharpoonright r}(\mathbf{c}_0)),
    \Cen(\sigma_{\upharpoonright r}(\mathbf{c}))
  \bigr)
  \leq
  \Rad(\sigma_{\upharpoonright r}(\mathbf{c}_0))
  -\Rad(\sigma_{\upharpoonright r}(\mathbf{c})).
\end{equation}
\end{lemma}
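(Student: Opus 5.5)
We want: for $\mathbf{c}$ near $\mathbf{c}_0$, $\sigma(\mathbf{c},r)\domle\sigma(\mathbf{c}_0,r)$, which is exactly \eqref{eq:local-centre-radius} by \eqref{eq:square-order}. This is what "local minimum" means: $\mathbf{c}_0$ has a neighbourhood on which the slice takes values below $\sigma_{\upharpoonright r}(\mathbf{c}_0)$. The natural tool is Scott continuity combined with finiteness of the image: the element $(\mathbf{c}_0,r)$ is the directed supremum of the elements $(\mathbf{c}_0,r+\eta)$, $\eta\downarrow 0$, which are way below it by \cref{prop:way-below}. So $\sigma(\mathbf{c}_0,r)=\sup_{\eta>0}\sigma(\mathbf{c}_0,r+\eta)$, and this is an increasing chain inside the finite image of $\sigma$. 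A chain in a finite set is eventually constant, so there is $\eta>0$ with $\sigma(\mathbf{c}_0,r+\eta)=\sigma(\mathbf{c}_0,r)$.

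Next, set $U=\{\mathbf{c}\in[0,1]^2 : d_\infty(\mathbf{c},\mathbf{c}_0)<\eta\}$, which is a relatively open neighbourhood of $\mathbf{c}_0$. For $\mathbf{c}\in U$ we have $d_\infty(\mathbf{c}_0,\mathbf{c})\le\eta=(r+\eta)-r$, so $(\mathbf{c}_0,r+\eta)\domle(\mathbf{c},r)$ by \eqref{eq:square-order}. By monotonicity,
\[
  \sigma_{\upharpoonright r}(\mathbf{c}_0)=\sigma(\mathbf{c}_0,r+\eta)\domle\sigma(\mathbf{c},r)=\sigma_{\upharpoonright r}(\mathbf{c}).
\]
This already gives the local minimum property at every point, with no finiteness hypothesis; if $\sigma_{\upharpoonright r}(\mathbf{c}_0)=\bot$ the statement is trivial.

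For the stronger statement, assume $\sigma_{\upharpoonright r}(\mathbf{c}_0)$ is finite. Since the only element above nothing but $\bot$ is $\bot$ itself, anything above a finite square is finite, so $\sigma_{\upharpoonright r}(\mathbf{c})$ is finite for all $\mathbf{c}\in U$. Unwinding the order between the two finite squares via \eqref{eq:square-order} yields exactly \eqref{eq:local-centre-radius}.

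The only delicate point is the eventual stabilisation of the chain. Here I will note explicitly that the supremum of a chain that is eventually constant equals that constant value. Also, directed suprema in $\Sqbot$ of the elements $(\mathbf{c}_0,r+\eta)$ are indeed $(\mathbf{c}_0,r)$; this follows from continuity (\cref{prop:way-below}), or directly, since any upper bound has radius at most $r$ and centre within distance $0$ of $\mathbf{c}_0$. Everything else is direct unwinding of definitions.
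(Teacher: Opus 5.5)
Your argument is correct and is essentially the paper's proof: you stabilise the chain $\sigma(\mathbf{c}_0,r+\eta)$ inside the finite image, take $U$ to be the open $d_\infty$-ball of radius $\eta$, and combine $(\mathbf{c}_0,r+\eta)\domle(\mathbf{c},r)$ with monotonicity. Two slips need fixing, neither of which affects the argument you actually carry out.

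\begin{itemize}
\item Your opening sentence has the order reversed. The target is $\sigma(\mathbf{c}_0,r)\domle\sigma(\mathbf{c},r)$, meaning nearby values lie \emph{above} $\sigma_{\upharpoonright r}(\mathbf{c}_0)$, not below it. The direction $\sigma(\mathbf{c},r)\domle\sigma(\mathbf{c}_0,r)$ that you first state would give \eqref{eq:local-centre-radius} with the radii swapped. The body of your proof does derive the correct direction.
\item In your side remark on the supremum, an upper bound $(\mathbf{c}',r')$ of the chain need not have centre $\mathbf{c}_0$. It need only satisfy $d_\infty(\mathbf{c}_0,\mathbf{c}')\leq r-r'$, which is exactly the statement that it lies above $(\mathbf{c}_0,r)$.
\end{itemize}
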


Before proving the lemma, I invite the reader to read
\eqref{eq:local-centre-radius} as a constraint that forces
$\Rad(\sigma_{\upharpoonright r}(\mathbf{c}_0))$ to be large if there
are points $\mathbf{c}$ arbitrarily close to $\mathbf{c}_0$ for which
the centre $\Cen(\sigma_{\upharpoonright r}(\mathbf{c}))$ is far from
$\Cen(\sigma_{\upharpoonright r}(\mathbf{c}_0))$.

\begin{proof}
Fix $\mathbf{c}_0\in[0,1]^2$. For every $n\geq1$, put
\[
  u_n\coloneqq(\mathbf{c}_0,r+1/n).
\]
The sequence $(u_n)_{n\geq1}$ is an increasing chain in $\Sqbot$ with
supremum $(\mathbf{c}_0,r)$. Hence, by Scott continuity,
\[
  \sigma_{\upharpoonright r}(\mathbf{c}_0)
  =
  \sigma(\mathbf{c}_0,r)
  =
  \sup_{n\geq1}\sigma(u_n).
\]
The increasing chain $(\sigma(u_n))_{n\geq1}$ is contained in the finite
image of $\sigma$, and therefore eventually stabilises. Its eventual value
must be its supremum, so we may choose $n\geq1$ such that
\[
  \sigma(u_n)=\sigma_{\upharpoonright r}(\mathbf{c}_0).
\]
Set
\[
  U\coloneqq
  \{\mathbf{c}\in[0,1]^2\mid d_\infty(\mathbf{c},\mathbf{c}_0)<1/n\}.
\]
For every $\mathbf{c}\in U$, we have
\[
  d_\infty(\mathbf{c}_0,\mathbf{c})
  <
  \frac1n
  =
  \left(r+\frac1n\right)-r,
\]
so \cref{prop:way-below} gives
\[
  u_n\ll(\mathbf{c},r).
\]
In particular, $u_n\domle(\mathbf{c},r)$. By monotonicity of $\sigma$,
\[
  \sigma_{\upharpoonright r}(\mathbf{c}_0)
  =
  \sigma(u_n)
  \domle
  \sigma(\mathbf{c},r)
  =
  \sigma_{\upharpoonright r}(\mathbf{c})
  \qquad(\mathbf{c}\in U).
\]
Thus $\mathbf{c}_0$ is a local minimum of $\sigma_{\upharpoonright r}$.

Suppose now that $\sigma_{\upharpoonright r}(\mathbf{c}_0)$ is finite.
Since the only nonfinite
element of $\Sqbot$ is $\bot$, and $\bot$ does not lie above any finite
square, the preceding inequality implies that
$\sigma_{\upharpoonright r}(\mathbf{c})$ is finite for
every $\mathbf{c}\in U$. The definition of the formal-square order then gives
\[
  d_\infty\bigl(
    \Cen(\sigma_{\upharpoonright r}(\mathbf{c}_0)),
    \Cen(\sigma_{\upharpoonright r}(\mathbf{c}))
  \bigr)
  \leq
  \Rad(\sigma_{\upharpoonright r}(\mathbf{c}_0))
  -\Rad(\sigma_{\upharpoonright r}(\mathbf{c})),
\]
which is \eqref{eq:local-centre-radius}.
\end{proof}

By \cref{lem:local-order}, any motion of the output centres on a slice
must be paid for by oscillation of the output radii.  When the output
centres stay close to the identity on the boundary, that motion is
substantial.  Order preservation then forces the average output radius
on infinitesimally deeper slices to grow rapidly.  We now make this
precise.

\subsection{Infinitesimal radial growth}\label{sec:analytic}

For a function $f \colon [0,1]^2\to [0, \infty]$ with finite image and $c \geq 0$, define its
local upper envelope by
\begin{align*}
	M_c f \colon [0,1]^2 & \longrightarrow [0, \infty]\\
	\mathbf{z} & \longmapsto \max\{f(\widetilde{\mathbf{z}}) \mid \widetilde{\mathbf{z}}\in[0,1]^2,
              d_\infty(\mathbf{z},\widetilde{\mathbf{z}})\leq c\}.
\end{align*}
The maximum exists because the image of $f$ is finite.
Notice that $M_c f$ is pointwise larger than $f$; moreover, roughly speaking, the more variation $f$ has, the bigger will be the difference between $M_c f$ and $f$.
The reason why this is pertinent is the following.

\begin{lemma}[Big variation implies big growth on the attained radii]
\label{lem:radial-upper-envelope}
Let $\sigma\colon\Sqbot\to\Sqbot$ be an order-preserving function
with finite image. For $s\in[0,\infty)$, define
\begin{align*}
  \rho_s\colon [0,1]^2 & \longrightarrow[0,\infty],\\
  \mathbf{c} &\longmapsto \Rad\bigl(\sigma(\mathbf{c},s)\bigr).
\end{align*}
If $0\leq r\leq r' < \infty$, then
\[
  \rho_{r'}\geq M_{r'-r}\rho_r
\]
pointwise on $[0,1]^2$.
\end{lemma}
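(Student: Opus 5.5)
The plan is to reduce the inequality to monotonicity of $\sigma$ together with \cref{rem:radius-antitone}, exactly as in the informal computation of \cref{sec:main-idea}. Fix $\mathbf{z}\in[0,1]^2$ and set $c=r'-r\geq0$. First I will check that $M_c\rho_r(\mathbf{z})$ is well defined, i.e.\ that it is a genuine maximum of $\rho_r$ over the set $\{\widetilde{\mathbf{z}}\in[0,1]^2\mid d_\infty(\mathbf{z},\widetilde{\mathbf{z}})\leq c\}$. This set is nonempty because it contains $\mathbf{z}$. The function $\rho_r$ takes finitely many values in $[0,\infty]$, since it factors through the finite image of $\sigma$ followed by $\Rad$. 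Hence there is a point $\widetilde{\mathbf{z}}$ in the set with $\rho_r(\widetilde{\mathbf{z}})=M_c\rho_r(\mathbf{z})$.

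Next, I compare the two inputs in the domain order. From the definition \eqref{eq:square-order},
\[
  (\widetilde{\mathbf{z}},r')\domle(\mathbf{z},r)
  \quad\Longleftrightarrow\quad
  d_\infty(\widetilde{\mathbf{z}},\mathbf{z})\leq r'-r=c .
\]
This inequality holds by the choice of $\widetilde{\mathbf{z}}$, so $(\mathbf{z},r')\domle(\widetilde{\mathbf{z}},r)$ as well, since $d_\infty$ is symmetric. This is precisely the statement that a deeper point lies below every shallower point within lateral distance equal to the depth difference.

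Applying monotonicity of $\sigma$ gives $\sigma(\mathbf{z},r')\domle\sigma(\widetilde{\mathbf{z}},r)$. Then \cref{rem:radius-antitone}, with the convention $\Rad(\bot)=\infty$, yields
\[
  \rho_{r'}(\mathbf{z})=\Rad(\sigma(\mathbf{z},r'))\geq\Rad(\sigma(\widetilde{\mathbf{z}},r))=\rho_r(\widetilde{\mathbf{z}})=M_c\rho_r(\mathbf{z}).
\]

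The only point needing care is the possibility of the value $\infty$, i.e.\ outputs equal to $\bot$. \cref{rem:radius-antitone} already covers this. If $\sigma(\widetilde{\mathbf{z}},r)=\bot$, then $\sigma(\mathbf{z},r')\domle\bot$ forces $\sigma(\mathbf{z},r')=\bot$, so both sides are $\infty$. There is no real obstacle here, and Scott continuity is not needed; order preservation and finiteness of the image suffice.
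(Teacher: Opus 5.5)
Your proposal is correct and matches the paper's proof: it compares $(\mathbf{z},r')\domle(\widetilde{\mathbf{z}},r)$ for $d_\infty(\mathbf{z},\widetilde{\mathbf{z}})\leq r'-r$, then applies monotonicity of $\sigma$ and \cref{rem:radius-antitone}. The only cosmetic difference is that you pick a maximizer $\widetilde{\mathbf{z}}$ first, whereas the paper bounds every admissible $\widetilde{\mathbf{c}}$ and then takes the maximum.
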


\begin{proof}
Fix $\mathbf{c}\in[0,1]^2$. Let $\widetilde{\mathbf{c}}\in[0,1]^2$ satisfy
\[
  d_\infty(\mathbf{c},\widetilde{\mathbf{c}})\leq r'-r.
\]
By the definition of the formal-square order,
\[
  (\mathbf{c},r')\domle(\widetilde{\mathbf{c}},r).
\]
Since $\sigma$ is order-preserving,
\[
  \sigma(\mathbf{c},r')\domle\sigma(\widetilde{\mathbf{c}},r).
\]
The radius is order-reversing with respect to $\domle$, by
\cref{rem:radius-antitone}; hence
\[
  \rho_{r'}(\mathbf{c})
  =
  \Rad\bigl(\sigma(\mathbf{c},r')\bigr)
  \geq
  \Rad\bigl(\sigma(\widetilde{\mathbf{c}},r)\bigr)
  =
  \rho_r(\widetilde{\mathbf{c}}).
\]
Since this holds for every $\widetilde{\mathbf{c}}\in[0,1]^2$ such that
$d_\infty(\mathbf{c},\widetilde{\mathbf{c}})\leq r'-r$, taking the maximum gives
\[
  \rho_{r'}(\mathbf{c})
  \geq
  \max\bigl\{
    \rho_r(\widetilde{\mathbf{c}})
    \mid
    \widetilde{\mathbf{c}}\in[0,1]^2,\
    d_\infty(\mathbf{c},\widetilde{\mathbf{c}})\leq r'-r
  \bigr\}
  =
  M_{r'-r}\rho_r(\mathbf{c}). \qedhere
\]
\end{proof}

The idea is the following: on every slice on which the output-centre map
has large boundary variation, \cref{lem:local-order} forces the associated
radius function $\rho_r$ to have large variation.  Its local upper envelope
$M_c\rho_r$ must therefore grow rapidly in $c$; the precise average growth
estimate is the content of \cref{thm:analytic-engine}.  Then, by
\cref{lem:radial-upper-envelope}, this means that the average radius
attained by $\sigma$ on a slice at depth $c$ must grow fast in $c$ (about
twice as fast as $c$).

The following theorem is the analytic statement needed in the proof.
It is useful to read its assumptions as follows.  The map $\gamma$ records
centres.  The scalar function $\rho$ records radii.  The local inequality
says that any local motion of $\gamma$ must be paid for by a decrease of
$\rho$.  For the final estimate, it is enough that $\gamma$ stay close to
the identity on the boundary of the square.

\begin{theorem}[Near-identity centre maps force radial oscillation]
\label{thm:analytic-engine}
Let $\sigma = (\gamma, \rho) \colon [0,1]^2 \to \Sq$ be a function with finite image such that, for every $\mathbf{z}\in[0,1]^2$, there is a relative neighbourhood
$U$ of $\mathbf{z}$ such that, for every $\widetilde{\mathbf{z}}\in U$,
\begin{equation}\label{eq:local-domination}
  d_\infty(\gamma(\mathbf{z}),\gamma(\widetilde{\mathbf{z}}))
  \leq \rho(\mathbf{z})-\rho(\widetilde{\mathbf{z}}).
\end{equation}
Set
\[
  \Delta_{v}
  \coloneqq 
  \int_0^1
  \abs{\gamma_1(1,y)-\gamma_1(0,y)}
  \dd y,
  \qquad
  \Delta_{h}
  \coloneqq
  \int_0^1
  \abs{\gamma_2(x,1)-\gamma_2(x,0)}
  \dd x.
\]
Then
\begin{align}
  \liminf_{\eps\downarrow0}
  \int_{[0,1]^2}\frac{M_\eps \rho(\mathbf{z})-\rho(\mathbf{z})}{\eps}\dd\mathbf{z}
  &\geq
  \int_0^1\Var(\rho(\cdot,y))\dd y
  +
  \int_0^1\Var(\rho(x,\cdot))\dd x
  \notag\\
  &\geq
  \Delta_{v}+\Delta_{h}.
  \label{eq:analytic-engine-conclusion}
\end{align}
In particular, for any $\delta \in [0, \infty)$, if
\[
  d_\infty(\gamma(\mathbf{z}),\mathbf{z})\leq\delta
  \qquad(\mathbf{z}\in\partial([0,1]^2)),
\]
then
\[
  \liminf_{\eps\downarrow0}
  \int_{[0,1]^2}\frac{M_\eps \rho(\mathbf{z})-\rho(\mathbf{z})}{\eps}\dd\mathbf{z}
  \geq 2(1-2\delta).
\]
\end{theorem}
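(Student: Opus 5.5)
The plan has three stages: a horizontal/vertical splitting of the envelope, a one-dimensional growth estimate along each line, and a chain argument relating variation of $\rho$ to motion of $\gamma$.

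\textbf{Splitting the envelope.} Since $d_\infty$-balls are products of intervals, the square envelope factors through one-dimensional envelopes. Let $M^h_\eps$ and $M^v_\eps$ be the upper envelopes taken only along horizontal, respectively vertical, segments of length $\le\eps$ inside $[0,1]^2$. Then
\[
  M_\eps\rho=M^h_\eps\bigl(M^v_\eps\rho\bigr),
\]
because any $\widetilde{\mathbf{z}}$ with $d_\infty(\mathbf{z},\widetilde{\mathbf{z}})\le\eps$ is reached by a horizontal move followed by a vertical move. This gives the telescoping identity
\[
  M_\eps\rho-\rho=\bigl(M^h_\eps g_\eps-g_\eps\bigr)+\bigl(M^v_\eps\rho-\rho\bigr),
  \qquad g_\eps\coloneqq M^v_\eps\rho .
\]
Integrating and applying Fubini reduces the first inequality of \eqref{eq:analytic-engine-conclusion} to one-dimensional statements. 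The vertical term needs $\liminf_{\eps}\frac1\eps\int_0^1(M_\eps f-f)\ge\Var f$ for $f=\rho(x,\cdot)$. The horizontal term needs the same estimate applied to $g_\eps(\cdot,y)$, followed by $\liminf_\eps\Var g_\eps(\cdot,y)\ge\Var\rho(\cdot,y)$. Fatou's lemma in the outer variable then finishes. Measurability is unproblematic because all these functions have finite image and are built from closed superlevel sets, which I would check briefly.

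\textbf{Upper semicontinuity and the one-dimensional estimate.} Taking $\widetilde{\mathbf{z}}\in U$ in \eqref{eq:local-domination} gives $\rho(\widetilde{\mathbf{z}})\le\rho(\mathbf{z})$. So every point is a local maximum of $\rho$, and since $\rho$ has finite image, for each $\mathbf{z}$ we get $M_\eps\rho(\mathbf{z})=\rho(\mathbf{z})$ for all small $\eps$. Hence $g_\eps\to\rho$ pointwise, and lower semicontinuity of total variation under pointwise convergence gives $\liminf\Var g_\eps(\cdot,y)\ge\Var\rho(\cdot,y)$.

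For the estimate $\liminf\frac1\eps\int(M_\eps f-f)\ge\Var f$, with $f$ of finite image on $[0,1]$, I would fix a partition $t_0<\dots<t_k$ nearly realising $\Var f$. By refining, each increment $f(t_{i+1})-f(t_i)$ is witnessed by the larger endpoint value. For each increase from a lower point to a higher point $p$, note that $M_\eps f\ge f(p)$ on $[p-\eps,p+\eps]\cap[0,1]$. Because $f$ is locally maximal at $p$ and has finite image, the set where $f<f(p)$ adjacent to $p$ on the relevant side has measure close to $\eps$ after a suitable choice of points. Choosing the $t_i$ as points near jump locations, each jump contributes about $\eps\,|f(t_{i+1})-f(t_i)|$ on disjoint windows. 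This step is the main obstacle: making this windowing rigorous for a finite-image function that may have infinitely many jumps. I would first try the layer-cake formula. Write $\int(M_\eps f-f)=\int_0^\infty\bigl|\{M_\eps f>a\}\setminus\{f>a\}\bigr|\,da$. For each level $a$, the set $\{f>a\}$ is a finite union of intervals up to measure zero, and its $\eps$-dilation gains about $\eps$ per boundary point in the interior of $[0,1]$. The number of such boundary points bounds from below the crossing count whose integral over $a$ equals $\Var f$ (a Banach-indicatrix-type identity). Fatou in $a$ then concludes.

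\textbf{Variation controls centre motion.} For $\Var\rho(\cdot,y)\ge|\gamma_1(1,y)-\gamma_1(0,y)|$, restrict to the segment and cover $[0,1]$ by the neighbourhoods $U_z$ given by \eqref{eq:local-domination}. Take a finite subcover and build a chain $0=x_0<z_1<x_1<\dots<z_n<x_n=1$ with $x_{i-1},x_i\in U_{z_i}$. Then
\[
  |\gamma_1(x_i)-\gamma_1(x_{i-1})|\le(\rho(z_i)-\rho(x_{i-1}))+(\rho(z_i)-\rho(x_i)),
\]
and summing bounds $|\gamma_1(1,y)-\gamma_1(0,y)|$ by the variation of $\rho$ along this partition. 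Integrating in $y$ gives $\ge\Delta_v$; the vertical direction is symmetric and gives $\Delta_h$. Finally, under the boundary hypothesis, $\gamma_1(1,y)\ge1-\delta$ and $\gamma_1(0,y)\le\delta$. So $\Delta_v\ge1-2\delta$ and likewise $\Delta_h\ge1-2\delta$, which yields the bound $2(1-2\delta)$.
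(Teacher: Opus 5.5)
There is a genuine gap in how you handle the horizontal term. Your architecture is essentially the paper's: split the square envelope into a horizontal and a vertical enlargement, reduce to one-dimensional growth, use lower semicontinuity of variation, then bound $\Var\rho(\cdot,y)$ below by the endpoint displacement of $\gamma_1$. The problem is that the estimate
\[
  \liminf_{\eps\downarrow0}\frac1\eps\int_0^1(M_\eps f-f)\geq\Var f
\]
is an asymptotic statement about a \emph{fixed} $f$. It says nothing about $\frac1\eps\int_0^1(M^h_\eps g_\eps-g_\eps)(x,y)\dd x$, because $g_\eps(\cdot,y)$ changes with the same $\eps$ that drives the limit.

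Nor is there a fixed-scale version that you could chain with $\liminf_\eps\Var g_\eps(\cdot,y)\geq\Var\rho(\cdot,y)$. Take $f=\one_A$. An internal gap of $A$ of length $\ell<2\eps$ contributes $2$ to $\Var f$, but only $\ell/\eps<2$ to $\frac1\eps(\abs{A_\eps}-\abs{A})$. Nothing in your plan prevents the horizontal slices of $g_\eps=M^v_\eps\rho$ from having many gaps that are short compared with $\eps$. So the two facts you cite do not combine into the bound you need.

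What is missing is an identity that is exact at every scale; this is what the paper uses in its slicing estimate. For compact $A\subseteq[0,1]$,
\[
  \abs{A_\eps}-\abs{A}=\int_0^\eps\Var(\one_{A_s})\dd s,
\]
proved gap by gap: an internal gap of length $\ell$ loses $\min(2s,\ell)$ and a boundary gap loses $\min(s,\ell)$. With layer-cake and coarea this gives $\int_0^1(M_\eps f-f)\dd x=\int_0^\eps\Var(M_sf)\dd s$ for finite-image upper semicontinuous $f$. Substituting $s=\tau\eps$,
\[
  \frac1\eps\int_0^1\bigl(M^h_\eps g_\eps-g_\eps\bigr)(x,y)\dd x
  =\int_0^1\Var\bigl(M^h_{\tau\eps}g_\eps(\cdot,y)\bigr)\dd\tau .
\]
Now $M^h_{\tau\eps}g_\eps(x,y)$ is the maximum of $\rho$ over a rectangle shrinking to $(x,y)$. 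By upper semicontinuity it equals $\rho(x,y)$ for small $\eps$. Lower semicontinuity of variation, together with Fatou in $\tau$ and then in $y$, gives the horizontal bound. The paper runs the same argument on superlevel sets, using $(E_{\eps,\tau\eps})_x\downarrow E_x$.

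Two smaller points. First, your sketch of the fixed-$f$ estimate assumes superlevel sets are finite unions of intervals up to null sets. That fails when $\Var f=\infty$: $\rho=\one_{[0,1]\times C}$ with $C$ a fat Cantor set and $\gamma$ constant satisfies the hypotheses. That case needs the same gap count with monotone convergence. Second, your chain argument for $\Var\rho(\cdot,y)\geq\abs{\gamma_1(1,y)-\gamma_1(0,y)}$ is sound and is a simpler substitute for the paper's one-dimensional domination lemma, since only the endpoint displacement is needed. Build the chain by a supremum argument rather than from an arbitrary finite subcover, so that each $z_i$ really lies between $x_{i-1}$ and $x_i$.
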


The proof is given below, after two auxiliary results.

The main task is to explain
why the first-order growth of $M_\eps \rho$ sees the sum of these horizontal
and vertical variations.  This is precisely the content of the
Minkowski-slicing estimate below.

For a one-variable function $g \colon [0,1]\to\R$, its total variation is
\[
  \Var(g)
  =\sup_{0=t_0<\cdots<t_n=1}
    \sum_{j=1}^n\abs{g(t_j)-g(t_{j-1})},
\]
with values in $[0,\infty]$.

\begin{lemma}[One-dimensional local domination]\label{lem:one-dimensional-domination}
Let $f,q \colon [0,1]\to\R$ have finite image.  Suppose that every
$x\in[0,1]$ has a relative neighbourhood $U_x$ such that
\[
  \abs{q(x)-q(\widetilde x)}
  \leq f(x)-f(\widetilde x)
  \qquad(\widetilde x\in U_x).
\]
Then
\[
  \Var(q)\leq\Var(f).
\]
\end{lemma}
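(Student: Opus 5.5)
The plan is to exploit finiteness of the images: both $f$ and $q$ are step functions, so it suffices to bound finite sums $\sum_j\abs{q(t_j)-q(t_{j-1})}$ over partitions. First, by compactness of $[0,1]$, pick finitely many points whose neighbourhoods $U_x$ cover $[0,1]$ (shrinking each $U_x$ to a relative open interval). The local hypothesis compares values only with the centre $x$. It does not compare two arbitrary points of $U_x$, so the partition must be routed through the centres.

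Fix a partition $0=t_0<\dots<t_n=1$. Because $\Var(q)$ can only grow under refinement, I will refine it and insert points so that consecutive points are "linked" through centres. Concretely, take a Lebesgue number $\lambda$ of the cover. Refine the partition so that consecutive gaps are smaller than $\lambda$. Then each interval $[t_{j-1},t_j]$ lies in some $U_{x_j}$. Now insert the centre $x_j$ between $t_{j-1}$ and $t_j$ when $x_j\in[t_{j-1},t_j]$. If $x_j$ lies outside that interval, I will instead use the chain $t_{j-1}\to x_j\to t_j$ directly. For this chain the triangle inequality gives
\[
\abs{q(t_j)-q(t_{j-1})}\le\abs{q(t_{j-1})-q(x_j)}+\abs{q(x_j)-q(t_j)}\le \bigl(f(x_j)-f(t_{j-1})\bigr)+\bigl(f(x_j)-f(t_j)\bigr).
\]
The right-hand side is exactly the variation of $f$ along the path $t_{j-1},x_j,t_j$, because $x_j$ dominates both endpoints: $f(x_j)\ge f(t_{j-1})$ and $f(x_j)\ge f(t_j)$.

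The main obstacle is summing these local bounds into $\Var(f)$ when $x_j$ falls outside $[t_{j-1},t_j]$, since then the path backtracks and is not a monotone partition. To avoid this, I would choose the cover points better. The neighbourhoods $U_x$ are intervals containing $x$ in their interior (relative to $[0,1]$). I will use the standard chain construction: cover $[0,1]$ by finitely many $U_{x_1},\dots,U_{x_k}$ with $x_1<\dots<x_k$, and arrange, by discarding redundant sets, that consecutive $U_{x_i},U_{x_{i+1}}$ overlap. Then pick $s_i\in U_{x_i}\cap U_{x_{i+1}}\cap(x_i,x_{i+1})$. Such a point exists since each neighbourhood is an interval containing its centre and the two overlap. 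Any partition can now be refined to contain $0,x_1,s_1,x_2,\dots,x_k,1$ together with its original points. Each subinterval then lies in one $[x_i,s_i]$ or $[s_{i},x_{i+1}]$, or in an end segment, and so has one endpoint at a centre whose neighbourhood contains the whole subinterval.

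It remains to handle intermediate original points $t$ inside such a piece, for instance $x_i<t<t'<s_i$ with both in $U_{x_i}$. Here I would use $\abs{q(t)-q(t')}\le\abs{q(t)-q(x_i)}+\abs{q(x_i)-q(t')}$, which is too lossy. Instead I will note that monotonicity of the partition is unnecessary for the final bound: it is enough to show $\Var(q|_{[x_i,s_i]})\le f(x_i)-f(s_i)+\dots$. That bound fails in general, so a local induction is needed. The cleanest fix I will try is induction on the number of distinct values of $f$ (finite image). On the set where $f$ attains its maximum value, the local hypothesis forces $q$ to be locally constant, hence constant on each component interval. Removing that level reduces the problem, and jumps of $q$ are then charged to jumps of $f$ at the same point with $\abs{\Delta q}\le\abs{\Delta f}$. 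I expect this pointwise jump comparison to be the real engine. Both are step functions with finitely many jump points, and at each jump point $x$ the hypothesis at $x$ gives $\abs{q(x)-q(x^\pm)}\le f(x)-f(x^\pm)$. Their variations are sums of such one-sided jumps, which yields $\Var(q)\le\Var(f)$. So the final plan is: show that $f$ and $q$ are piecewise constant with finitely many breakpoints, express both variations as sums of one-sided jumps at breakpoints, and compare jump by jump.
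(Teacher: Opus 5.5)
Your final plan is the paper's argument: piecewise constancy with finitely many breakpoints, both variations written as sums of one-sided jumps, and the jump-by-jump comparison $\abs{q(x)-q(x^\pm)}\leq f(x)-f(x^\pm)$ from the hypothesis at $x$. However, the step that makes it work is asserted rather than proved, and as stated it is false.

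Finite image together with the local hypothesis does not give finitely many jump points. Take $K=\{0\}\cup\{1/n \mid n\geq1\}$ and $f=q=\one_K$. Every $x\in K$ satisfies the hypothesis with $U_x=[0,1]$. Every $x\notin K$ has a neighbourhood missing the closed set $K$, on which $f=q=0$. Yet $f$ and $q$ fail to be locally constant at infinitely many points.

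The same problem undermines your induction sketch. The top level set $\{f=\max f\}$ is only known to be closed; it can be a Cantor set, for instance with $f=q=\one_C$. So knowing that $q$ is locally constant on it gives no finite decomposition into intervals. The Lebesgue-number and chain constructions you discarded are not needed.

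The missing idea is a case split on $\Var(f)$:
\begin{enumerate}
\item If $\Var(f)=\infty$, there is nothing to prove; this is exactly the situation of the examples above.
\item If $\Var(f)<\infty$ and $f$ is nonconstant, let $\kappa>0$ be the least distance between distinct values of $f$. Suppose $f$ fails to be locally constant at $m$ distinct points. Choose pairwise disjoint, ordered relative open intervals around them, each containing some $a_j<b_j$ with $f(a_j)\neq f(b_j)$. A partition through all the $a_j,b_j$ gives $\Var(f)\geq m\kappa$. Hence the set $N$ of points where $f$ is not locally constant has at most $\Var(f)/\kappa$ elements.
\item On each component of $[0,1]\setminus N$ the function $f$ is constant, so the hypothesis forces $q$ to be locally constant and hence constant there.
\item If $f$ is constant, $q$ is locally constant everywhere and hence constant by connectedness.
\end{enumerate}
Only now are both functions step functions with breakpoints in $N$. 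Your comparison of one-sided jumps then finishes the proof. Use the partition consisting of $0$, $1$, the points of $N$, and one point in each component of $[0,1]\setminus N$. This partition realises both $\Var(f)$ and $\Var(q)$. Every point is a local maximum of $f$, so each $f(x)-f(x^\pm)$ is nonnegative, and comparing jumps term by term gives $\Var(q)\leq\Var(f)$.
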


\begin{proof}
The displayed inequality first shows that every point is a local maximum
of $f$.  If $\Var(f)=\infty$, there is nothing to prove.  If $f$ is
constant, the same inequality makes $q$ locally constant; connectedness
of $[0,1]$ then makes $q$ constant, and the conclusion is immediate.

Assume now that $f$ takes at least two values, and let
\[
  \kappa
  =
  \min\{
    \abs{u-v}
    \mid
    u,v\in f([0,1]),\ u\neq v
  \}
  >0.
\]
Let
\[
  N
  =
  \{x\in[0,1]\mid
    f\text{ is not locally constant at }x
  \}.
\]
We claim that $N$ is finite.  Indeed, let
\[
  x_1<\cdots<x_m
\]
be distinct points of $N$.  Choose pairwise disjoint relative open
intervals $I_j$ with $x_j\in I_j$, ordered so that
\[
  \sup I_j<\inf I_{j+1}
  \qquad(1\leq j<m).
\]
Since $f$ is not locally constant at $x_j$, there are
$a_j<b_j$ in $I_j$ such that
\[
  f(a_j)\neq f(b_j).
\]
Extending the ordered list
\[
  a_1<b_1<\cdots<a_m<b_m
\]
to a partition of $[0,1]$, we obtain
\[
  \Var(f)
  \geq
  \sum_{j=1}^m
  \abs{f(b_j)-f(a_j)}
  \geq m\kappa.
\]
Thus every finite subset of $N$ has cardinality at most
$\Var(f)/\kappa$, and therefore $N$ is finite.

On every connected component of $[0,1]\setminus N$, the function $f$ is
locally constant and hence constant.  On the same component, the local
domination inequality makes $q$ locally constant, and hence constant.

At a point $a\in N$, let
$f(a^-),f(a^+)$ and $q(a^-),q(a^+)$ denote the constant values on the
adjacent components of $[0,1]\setminus N$, whenever the corresponding
side exists.  Since $a$ is a local maximum of $f$, the local inequality
at $a$ gives
\[
  \abs{q(a)-q(a^-)}
  \leq f(a)-f(a^-),
  \qquad
  \abs{q(a)-q(a^+)}
  \leq f(a)-f(a^+).
\]
Write $N=\{c_1<\cdots<c_k\}$.  Form a partition $P$ by taking $0$, $1$,
all the points $c_i$, and one point in each nonempty connected component
of $[0,1]\setminus N$, with repetitions omitted.  Since both $f$ and $q$
are constant on each such component, this partition contains all their
possible changes.  If $V_P$ denotes the corresponding partition-variation
sum, then
\[
  V_P(f)=\Var(f),
  \qquad
  V_P(q)=\Var(q).
\]
The summands in these two partition variations are precisely the
one-sided jumps displayed above.  Comparing them term by term therefore
gives
\[
  \Var(q)\leq\Var(f). \qedhere
\]
\end{proof}

We next state the geometric estimate that connects slice variation to
neighbourhood growth.  Its self-contained proof is given in
\cref{app:minkowski-proof}.

For a compact set $E\subseteq[0,1]^2$, write
\[
  E^y=\{x\in[0,1] \mid (x,y)\in E\},
  \qquad
  E_x=\{y\in[0,1] \mid (x,y)\in E\},
\]
and
\[
  E_\eps^{[0,1]^2}=(E+[-\eps,\eps]^2)\cap[0,1]^2.
\]

\begin{proposition}[Relative anisotropic Minkowski-slicing estimate]
\label{prop:minkowski-slicing}
For every compact $E\subseteq[0,1]^2$,
\begin{equation}\label{eq:minkowski-slicing-main}
  \liminf_{\eps\downarrow0}
  \frac{\abs{E_\eps^{[0,1]^2}}-\abs{E}}{\eps}
  \geq
  \int_0^1\Var(\one_{E^y})\dd y
  +
  \int_0^1\Var(\one_{E_x})\dd x.
\end{equation}
\end{proposition}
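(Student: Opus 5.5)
The plan is to decompose the square neighbourhood into a horizontal and a vertical dilation, and to estimate the gain of each separately by a one-dimensional gap-counting argument, slice by slice, with Fatou's lemma. Write $Q=[0,1]^2$, and for a set $B\subseteq Q$ let $H_\eps(B)=(B+[-\eps,\eps]\times\{0\})\cap Q$ and $V_\eps(B)=(B+\{0\}\times[-\eps,\eps])\cap Q$.

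\emph{Step 1: decomposition.} Since $Q$ is a product of intervals and $[-\eps,\eps]^2=[-\eps,\eps]\times\{0\}+\{0\}\times[-\eps,\eps]$, one checks directly that
\[
  E_\eps^{Q}=V_\eps(H_\eps(E)).
\]
Indeed, if $(x,y)\in Q$ and $(x',y')\in E$ with $\abs{x-x'},\abs{y-y'}\le\eps$, then $(x,y')\in Q$ is an intermediate point. Setting $H=H_\eps(E)$, which is compact, we get
\[
  \abs{E_\eps^Q}-\abs{E}=\bigl(\abs{V_\eps(H)}-\abs{H}\bigr)+\bigl(\abs{H}-\abs{E}\bigr).
\]
Both brackets are computed by Fubini on slices, and we take the $\liminf$ of each divided by $\eps$ separately, since the liminf of a sum dominates the sum of liminfs.

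\emph{Step 2: the one-dimensional gain lemma.} Let $A\subseteq[0,1]$ be compact, let $A_\eps=(A+[-\eps,\eps])\cap[0,1]$, and let $\mathcal G$ be a finite family of distinct connected components (gaps) of $[0,1]\setminus A$, each of length $>2\eps$. Then
\[
  \abs{A_\eps}-\abs{A}\ge\eps\sum_{G\in\mathcal G}w(G),
\]
where $w(G)=2$ if $G$ is an open interval with both endpoints in $A$, and $w(G)=1$ if $G$ touches $0$ or $1$ (excluding $G=[0,1]$, where $A=\varnothing$). This holds because the dilation swallows an $\eps$-strip at each endpoint of $G$ lying in $A$. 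Moreover, $\Var(\one_A)$ equals the supremum of $\sum_{G\in\mathcal G}w(G)$ over finite families of gaps. Hence, letting $\eps\downarrow0$ with a fixed finite family gives $\liminf_{\eps\downarrow 0}(\abs{A_\eps}-\abs{A})/\eps\ge\Var(\one_A)$, including the case $\Var(\one_A)=\infty$.

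\emph{Step 3: the horizontal term.} The $y$-slice of $H_\eps(E)$ is $(E^y)_\eps$. Applying Step 2 to each $E^y$ and using Fatou's lemma in $y$ (all integrands are nonnegative and measurable, since $E$ is compact) gives
\[
  \liminf_{\eps\downarrow0}\frac{\abs{H_\eps(E)}-\abs{E}}{\eps}\ge\int_0^1\Var(\one_{E^y})\dd y.
\]

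\emph{Step 4: the vertical term, which is the main obstacle.} Here the set being dilated, $H_\eps(E)$, itself depends on $\eps$, so Step 2 cannot be applied to a fixed set. The remedy is to work with the gaps of $E_x$, which persist. Fix $x$ and a finite family of gaps of $E_x$, and choose in each a compact subinterval $J$ of positive length; if the gap touches $0$ (respectively $1$), take $J$ to contain $0$ (respectively $1$). The compact sets $(H_\eps(E))_x$ decrease to $E_x$ as $\eps\downarrow0$, because $\bigcap_\eps H_\eps(E)=E$ by compactness. Therefore, for all small $\eps$, each $J$ lies in a single gap $G_J$ of $(H_\eps(E))_x$, and this gap has length greater than $2\eps$.

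These gaps $G_J$ are distinct for distinct $J$, because between any two of the $J$'s there is a point of $E_x\subseteq(H_\eps(E))_x$. Each $G_J$ also has the same weight $w$ as the original gap of $E_x$: it reaches $0$ or $1$ exactly when $J$ does, and it cannot reach the boundary otherwise, because $E_x$ separates $J$ from it.

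Step 2 applied to $A=(H_\eps(E))_x$ then gives a column gain of at least $\eps$ times the weighted count of the chosen gaps of $E_x$, for all small $\eps$. Taking the supremum over finite families gives a pointwise $\liminf$ of at least $\Var(\one_{E_x})$, and Fatou in $x$ yields
\[
  \liminf_{\eps\downarrow0}\frac{\abs{V_\eps(H_\eps(E))}-\abs{H_\eps(E)}}{\eps}\ge\int_0^1\Var(\one_{E_x})\dd x.
\]
Adding this to Step 3 proves \eqref{eq:minkowski-slicing-main}.

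I expect the delicate points to be the monotone-convergence argument in Step 4 and checking measurability of the slice functions, for example via lower semicontinuity of $\Var$ under Hausdorff convergence of slices, or by working with rational partitions.
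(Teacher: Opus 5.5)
Your proof is correct. Its skeleton matches the paper's: the same factorisation of the square enlargement into a horizontal enlargement followed by a vertical one (your $V_\eps(H_\eps(E))$ is the paper's $E_{\eps,\eps}$, with $H_\eps(E)=E_{\eps,0}$), Fubini on sections, and Fatou.

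The difference lies in the one-dimensional tool and in how you handle the $\eps$-dependence of the vertical term. The paper proves the exact identity $\abs{A_\rho}-\abs{A}=\int_0^\rho\Var(\one_{A_s})\dd s$ and rescales $s=\tau\eps$. It then applies lower semicontinuity of $\Var$ along the decreasing compact sections $(E_{\eps,\tau\eps})_x\downarrow E_x$, with Fatou in $(\tau,x)$. This forces it to prove joint measurability of $(a,b,x)\mapsto\Var(\one_{(E_{a,b})_x})$.

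You use only the one-sided gain bound for finitely many gaps longer than $2\eps$. You then show directly that finitely many gaps of $E_x$ persist, distinct and with unchanged weights, as long gaps of $(H_\eps(E))_x$. In effect this is the lower-semicontinuity argument carried out at the level of gaps. It needs no integral over intermediate radii, so you apply Fatou only to functions $x\mapsto\abs{K_x}$ with $K$ compact, whose measurability is standard. The paper's route yields an exact formula and treats both terms uniformly; yours is shorter and more elementary.

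One loose end remains: the right-hand side presupposes that $y\mapsto\Var(\one_{E^y})$ and $x\mapsto\Var(\one_{E_x})$ are measurable.

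\begin{itemize}
\item Your rational-partition idea fails. For $A=\{a\}$ with $a$ irrational, rational partitions give variation $0$ rather than $2$.
\item These functions need not be lower semicontinuous (take $E$ a single point), so a Hausdorff-type argument would need extra hyperspace measurability.
\end{itemize}

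The quick fix is already inside your Step 2. Summing over all gaps gives $\abs{A_\eps}-\abs{A}=\sum_G\min(w(G)\eps,\abs{G})\leq\eps\Var(\one_A)$, so in fact $\Var(\one_A)=\lim_{\eps\downarrow0}(\abs{A_\eps}-\abs{A})/\eps$. Hence $\Var(\one_{E^y})$ is the pointwise limit of the measurable functions $y\mapsto n\bigl(\abs{(H_{1/n}(E))^y}-\abs{E^y}\bigr)$, and likewise for $x\mapsto\Var(\one_{E_x})$. This is essentially how the paper's measurability lemma argues.
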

The proof of \cref{prop:minkowski-slicing} is in \cref{app:minkowski-proof}.

\begin{remark}[Geometric meaning]
Both sides of \eqref{eq:minkowski-slicing-main} measure an anisotropic
one-dimensional boundary size.  For
$E=\{a\}\times[0,1]$ with $a\in(0,1)$, the left-hand side is $2$;
the integral of the horizontal slice variations is $2$, whereas the
vertical slice term is $0$.  For the diagonal segment from
$(0,0)$ to $(1,1)$, both horizontal and vertical slices contribute $2$,
and the left-hand side is $4$.  These examples explain the two separate
slice terms.  We only need the lower bound and make no general equality
claim.
\end{remark}

We also use the elementary coarea identity
\begin{equation}\label{eq:coarea-stated}
  \int_\R\Var(\one_{\{g\geq t\}})\dd t=\Var(g)
\end{equation}
for finite-image functions $g \colon [0,1]\to\R$.  A direct proof is included in
\cref{app:coarea}.

\begin{proof}[Proof of \cref{thm:analytic-engine}]
We first verify that the quantities $\Delta_v$ and $\Delta_h$ are
well-defined.  The local domination inequality implies that
\[
  \rho(\widetilde{\mathbf{z}})\leq \rho(\mathbf{z})
  \qquad(\widetilde{\mathbf{z}}\in U),
\]
so every point is a local maximum of $\rho$.  Hence $\rho$ is upper
semicontinuous and, in particular, Borel measurable.

For $a\in \rho([0,1]^2)$, put $E_a=\rho^{-1}(a)$.  Each $E_a$ is Borel.  Moreover,
if $\mathbf{z},\widetilde{\mathbf{z}}\in E_a$ and $\widetilde{\mathbf{z}}$ is sufficiently close to
$\mathbf{z}$, then
\[
  d_\infty(\gamma(\mathbf{z}),\gamma(\widetilde{\mathbf{z}}))
  \leq \rho(\mathbf{z})-\rho(\widetilde{\mathbf{z}})=0.
\]
Thus $\gamma|_{E_a}$ is locally constant, hence continuous.  Since $\rho([0,1]^2)$
is finite, the sets $E_a$ form a finite Borel partition of $[0,1]^2$, and it
follows that $\gamma$ is Borel measurable.  Its image is finite, so $\gamma$ is
bounded.  Therefore the boundary functions occurring in the
definitions of $\Delta_v$ and $\Delta_h$ are bounded Borel functions,
and the two Lebesgue integrals are finite.

Hence the superlevel sets
\[
  E_t=\{\mathbf{z}\in[0,1]^2 \mid \rho(\mathbf{z})\geq t\}
\]
are compact.

Because $M_\eps \rho\geq \rho$, the layer-cake identity gives
\[
  M_\eps \rho(\mathbf{z})-\rho(\mathbf{z})
  =
  \int_\R
  \bigl(\one_{\{M_\eps \rho\geq t\}}(\mathbf{z})
       -\one_{\{\rho\geq t\}}(\mathbf{z})\bigr)\dd t.
\]
Moreover,
\[
  \{M_\eps \rho\geq t\}=(E_t)_\eps^{[0,1]^2}.
\]
This set is compact, so $M_\eps \rho$ is upper semicontinuous and hence
measurable.  Since $\rho$ has finite image, only finitely many distinct
superlevel sets $E_t$ occur as $t$ varies.  For each of them, the
section-variation functions are Borel measurable by
\cref{lem:section-measurability}.  Thus the functions of $(t,x)$ and
$(t,y)$ occurring below are Borel measurable, which supplies all joint
measurability needed below.
For every $\eps>0$, Tonelli's theorem and the preceding layer-cake
identity give
\begin{align*}
  I_\eps
  &\coloneqq
  \int_{[0,1]^2}
  \frac{M_\eps\rho(\mathbf z)-\rho(\mathbf z)}{\eps}
  \dd\mathbf z\\
  &=
  \int_{\R}
  \frac{
    \abs{(E_t)_\eps^{[0,1]^2}}-\abs{E_t}
  }{\eps}
  \dd t.
\end{align*}

For complete precision, the following use
of Fatou's lemma may be read along an arbitrary sequence of positive
$\eps$ tending to zero; the resulting inequality for every such
sequence is equivalent to the displayed one-sided liminf.  Fatou's lemma and
\cref{prop:minkowski-slicing} therefore yield
\begin{align}
  \liminf_{\eps\downarrow0}
  \int_{[0,1]^2}\frac{M_\eps \rho-\rho}{\eps}\dd\mathbf{z}
  &\geq
  \int_\R\left(
    \int_0^1\Var(\one_{(E_t)^y})\dd y
    +
    \int_0^1\Var(\one_{(E_t)_x})\dd x
  \right)\dd t \notag\\
  &=
  \int_0^1\Var(\rho(\cdot,y))\dd y
  +
  \int_0^1\Var(\rho(x,\cdot))\dd x,
  \label{eq:analytic-to-slices}
\end{align}
where the last equality follows from Tonelli and the coarea identity
\eqref{eq:coarea-stated} on each slice.

Fix $y\in[0,1]$.  Applying
\cref{lem:one-dimensional-domination} to
\[
  x\longmapsto \rho(x,y),
  \qquad
  x\longmapsto \gamma_1(x,y),
\]
gives
\[
  \Var(\rho(\cdot,y))
  \geq
  \Var(\gamma_1(\cdot,y))
  \geq
  \abs{\gamma_1(1,y)-\gamma_1(0,y)}.
\]
After integration in $y$,
\[
  \int_0^1\Var(\rho(\cdot,y))\dd y\geq\Delta_v.
\]
Likewise,
\[
  \int_0^1\Var(\rho(x,\cdot))\dd x\geq\Delta_h.
\]
Together with \eqref{eq:analytic-to-slices}, this proves the first
assertion.

For the final assertion, the boundary assumption gives, for every
$x,y\in[0,1]$,
\[
  \abs{\gamma_1(1,y)-\gamma_1(0,y)}\geq1-2\delta,
  \qquad
  \abs{\gamma_2(x,1)-\gamma_2(x,0)}\geq1-2\delta.
\]
Hence $\Delta_v+\Delta_h\geq2(1-2\delta)$.
\end{proof}

\subsection{Growth across a finite slab}
\label{sec:obstruction}

We now pass from the infinitesimal estimate to a whole finite slab.  This
gives the second conclusion announced in \cref{sec:main-idea}: if a deflation
moves the lateral boundary only slightly at every depth, then its average
output radius on the bottom slice must be large.

\begin{theorem}[Boundary preservation forces radial growth]
\label{thm:boundary-radial-growth}
Let $\sigma\colon\Sqbot\to\Sqbot$ be a deflation, let $m>0$, and let
$\delta\geq0$.  Assume that $\sigma(\mathbf{c},r)$ is finite for every
\[
  (\mathbf{c},r)\in[0,1]^2\times[0,m].
\]
For $r\in[0,m]$, set
\[
  \rho_r(\mathbf{c})
  =
  \Rad(\sigma(\mathbf{c},r)),
  \qquad
  \gamma_r(\mathbf{c})
  =
  \Cen(\sigma(\mathbf{c},r)),
\]
and
\[
  A(r)
  =
  \int_{[0,1]^2}\rho_r(\mathbf{c})\dd\mathbf{c}.
\]
Suppose that the output centres move the lateral boundary by at most
$\delta$, in the sense that
\begin{equation}\label{eq:boundary-centre-control}
  d_\infty(\gamma_r(\mathbf{c}),\mathbf{c})
  \leq\delta
  \qquad
  \bigl(\mathbf{c}\in\partial([0,1]^2),\ 0\leq r\leq m\bigr).
\end{equation}
Then
\begin{equation}\label{eq:slab-average-growth}
  A(m)-A(0)\geq2m(1-2\delta).
\end{equation}
In particular,
\begin{equation}\label{eq:bottom-average-radius}
  \int_{[0,1]^2}
  \Rad(\sigma(\mathbf{c},m))\dd\mathbf{c}
  =
  A(m)
  \geq
  2m(1-2\delta),
\end{equation}
and
\begin{equation}\label{eq:bottom-average-excess}
  \int_{[0,1]^2}
  \bigl(\Rad(\sigma(\mathbf{c},m))-m\bigr)\dd\mathbf{c}
  \geq
  m(1-4\delta).
\end{equation}
Consequently, there is $\mathbf{c}\in[0,1]^2$ such that
\begin{equation}\label{eq:bottom-point-excess}
  \Rad(\sigma(\mathbf{c},m))-m
  \geq
  m(1-4\delta).
\end{equation}
Equivalently, for every $\beta<m(1-4\delta)$, the radius excess is greater
than $\beta$ at some point of the slice at depth $m$.
\end{theorem}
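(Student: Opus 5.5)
The plan is to turn the infinitesimal estimate of \cref{thm:analytic-engine} into a lower bound on the right Dini derivative of $A$, and then integrate it over $[0,m]$.

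\textbf{Step 1: each slice satisfies the hypotheses of \cref{thm:analytic-engine}.} Fix $r\in[0,m]$. The slice $\sigma_{\upharpoonright r}=(\gamma_r,\rho_r)$ maps $[0,1]^2$ into $\Sq$, because all outputs on the slab are finite. It has finite image, because $\sigma$ does. \Cref{lem:local-order} gives the local domination \eqref{eq:local-domination}. The boundary hypothesis \eqref{eq:boundary-centre-control} is exactly the $\delta$-condition of the theorem. Therefore
\[
  \liminf_{\eps\downarrow0}\int_{[0,1]^2}\frac{M_\eps\rho_r-\rho_r}{\eps}\dd\mathbf{c}\geq2(1-2\delta).
\]

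\textbf{Step 2: a lower Dini derivative of $A$.} For $r<r+\eps\leq m$, \cref{lem:radial-upper-envelope} gives $\rho_{r+\eps}\geq M_\eps\rho_r$ pointwise. Integrating, $A(r+\eps)-A(r)\geq\int(M_\eps\rho_r-\rho_r)$. Hence the lower right Dini derivative satisfies
\[
  D_+A(r)\coloneqq\liminf_{\eps\downarrow0}\frac{A(r+\eps)-A(r)}{\eps}\geq2(1-2\delta)
  \qquad\text{for every } r\in[0,m).
\]
Two integrability points need checking here. First, $\rho_r$ is upper semicontinuous with finite image, as shown in the proof of \cref{thm:analytic-engine}, so it is bounded and measurable and $A(r)$ is finite. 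Second, $A$ is nondecreasing on $[0,m]$: this is \cref{lem:radial-upper-envelope} with $M_c\rho_r\geq\rho_r$.

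\textbf{Step 3: from the Dini bound to \eqref{eq:slab-average-growth}.} This is where I expect the main obstacle. We cannot simply integrate $D_+A$, because $A$ need not be continuous; it is only monotone, and monotone jumps only help. I would use the elementary lemma: if $D_+A(r)\geq k$ for all $r\in[0,m)$, then $A(m)-A(0)\geq km$. The argument is a continuity-induction sweep. Fix $k'<k$ and let $s=\sup\{r\in[0,m] : A(r')-A(0)\geq k'r' \text{ for all } r'\leq r\}$.
\begin{itemize}
  \item Left-approaching points: for $r'<s$, monotonicity gives $A(s)\geq A(r')\geq A(0)+k'r'$. Letting $r'\uparrow s$ yields $A(s)-A(0)\geq k's$. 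Only monotonicity is used here, not continuity.
  \item No stopping before $m$: if $s<m$, the Dini bound at $s$ gives $A(s+\eps)\geq A(s)+k'\eps$ for all small $\eps>0$. This pushes the sup beyond $s$, a contradiction.
\end{itemize}
So $s=m$, giving $A(m)-A(0)\geq k'm$; now let $k'\uparrow k$. Applying this with $k=2(1-2\delta)$ gives \eqref{eq:slab-average-growth}.

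\textbf{Step 4: the consequences.} Since $\rho_0\geq0$, we have $A(0)\geq0$, which gives \eqref{eq:bottom-average-radius}. The unit square has area $1$, so subtracting $m$ gives $A(m)-m\geq 2m-4m\delta-m=m(1-4\delta)$, which is \eqref{eq:bottom-average-excess}. Finally, an integrand whose average over a probability space is at least $m(1-4\delta)$ must attain at least that value somewhere. If it did not, the finite-image function $\rho_m-m$ would be strictly below $m(1-4\delta)$ everywhere, and hence its maximum and therefore its integral would be too. This gives \eqref{eq:bottom-point-excess} and the equivalent $\beta$-formulation.
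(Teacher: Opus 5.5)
Your proposal is correct. Steps~1, 2 and~4 coincide with the paper's proof:
\begin{itemize}
\item each slice is fed into \cref{thm:analytic-engine} through \cref{lem:local-order};
\item the lower right Dini bound on $A$ comes from \cref{lem:radial-upper-envelope}, together with monotonicity of $A$;
\item the pointwise excess is obtained from the average using the finite image of $\rho_m$.
\end{itemize}

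The one genuine difference is Step~3, the passage from the Dini bound to $A(m)-A(0)\geq 2m(1-2\delta)$.

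The paper argues measure-theoretically. It sets $\eta_n=m/(n+1)$ and takes the nonnegative difference quotients $g_n(r)=(A(r+\eta_n)-A(r))/\eta_n$, extended by $0$ on $(m-\eta_n,m]$. Monotonicity of $A$ gives
\[
\int_0^m g_n=\eta_n^{-1}\Bigl(\int_{m-\eta_n}^m A-\int_0^{\eta_n}A\Bigr)\leq A(m)-A(0),
\]
and Fatou's lemma then integrates the pointwise bound $\liminf_n g_n\geq 2(1-2\delta)$.

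Your real-induction sweep replaces this with a purely order-theoretic argument. It needs neither measurability of $A$ nor Fatou's lemma. Possible jumps of $A$ are handled by monotonicity at the left-limit step, in the same way the paper's end-averaging bound handles them. Your argument is also valid when $2(1-2\delta)\leq 0$, since the left-limit step only uses $\sup_{r'<s}k'r'\geq k's$.

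Both routes yield exactly the same constant. Yours is a little more elementary and self-contained. The paper's keeps the analytic style of the Fatou argument it already uses in the proof of \cref{thm:analytic-engine}.
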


The pointwise lower bound is positive when $\delta<1/4$, which is the
regime used in the final argument.

\begin{proof}
Fix $r\in[0,m]$.  By \cref{lem:local-order}, every
$\mathbf{c}_0\in[0,1]^2$ has a relative neighbourhood on which
\[
  d_\infty\bigl(
    \gamma_r(\mathbf{c}_0),
    \gamma_r(\mathbf{c})
  \bigr)
  \leq
  \rho_r(\mathbf{c}_0)-\rho_r(\mathbf{c}).
\]
In particular, every point is a local maximum of $\rho_r$, so $\rho_r$
is upper semicontinuous and hence measurable.  Applying
\cref{thm:analytic-engine} and using
\eqref{eq:boundary-centre-control}, we obtain
\begin{equation}\label{eq:boundary-spatial-growth}
  \liminf_{\eta\downarrow0}
  \int_{[0,1]^2}
  \frac{M_\eta\rho_r(\mathbf{c})-\rho_r(\mathbf{c})}{\eta}
  \dd\mathbf{c}
  \geq
  2(1-2\delta).
\end{equation}

Since $\sigma$ has finite image and all outputs on the slab are finite,
the functions $\rho_r$ are uniformly bounded there.  Thus $A$ is
finite-valued.

The function $A$ is nondecreasing.  Indeed, if $r\leq s$, then
\[
  (\mathbf{c},s)\domle(\mathbf{c},r),
\]
and therefore
\[
  \rho_s(\mathbf{c})\geq\rho_r(\mathbf{c}).
\]
In particular, $A$ is Borel measurable.  Moreover,
\cref{lem:radial-upper-envelope} gives
\[
  \rho_{r+\eta}\geq M_\eta\rho_r
\]
whenever $0<\eta<m-r$.  It follows from
\eqref{eq:boundary-spatial-growth} that
\begin{equation}\label{eq:average-lower-derivative}
  \liminf_{\eta\downarrow0}
  \frac{A(r+\eta)-A(r)}{\eta}
  \geq
  2(1-2\delta)
  \qquad(0\leq r<m).
\end{equation}

Set
\[
  \eta_n=\frac{m}{n+1}
\]
and define on $[0,m]$
\[
  g_n(r)
  =
  \begin{cases}
    \dfrac{A(r+\eta_n)-A(r)}{\eta_n},
      &0\leq r\leq m-\eta_n,\\[1ex]
    0,
      &m-\eta_n<r\leq m.
  \end{cases}
\]
The functions $g_n$ are measurable and nonnegative.  Monotonicity of
$A$ gives
\begin{align*}
  \int_0^m g_n(r)\dd r
  &=
  \frac1{\eta_n}
  \left(
    \int_{m-\eta_n}^m A(s)\dd s
    -
    \int_0^{\eta_n}A(s)\dd s
  \right)\\
  &\leq
  A(m)-A(0).
\end{align*}
By \eqref{eq:average-lower-derivative}, for every $r<m$,
\[
  \liminf_{n\to\infty}g_n(r)
  \geq
  2(1-2\delta).
\]
Fatou's lemma therefore yields
\[
  2m(1-2\delta)
  \leq
  \int_0^m\liminf_{n\to\infty}g_n(r)\dd r
  \leq
  \liminf_{n\to\infty}\int_0^m g_n(r)\dd r
  \leq
  A(m)-A(0),
\]
which proves \eqref{eq:slab-average-growth}.

All output radii are nonnegative, so $A(0)\geq0$.  This gives
\eqref{eq:bottom-average-radius}, and subtracting $m$ gives
\eqref{eq:bottom-average-excess}.  Finally, $\rho_m$ has finite image, so
it attains a maximum.  Since $[0,1]^2$ has area $1$, its maximum is at
least its average.  This proves \eqref{eq:bottom-point-excess}.
\end{proof}

It is worth recording the resulting unconditional quantitative
obstruction.  It no longer assumes boundary control explicitly.

\begin{corollary}[Quantitative radius obstruction]
\label{thm:radius-obstruction}
Let $\sigma\colon\Sqbot\to\Sqbot$ be a deflation, and let $m\geq0$.
Then there are $\mathbf{c}\in[0,1]^2$ and $r\in[0,m]$ such that
\[
  \Rad(\sigma(\mathbf{c},r))-r
  \geq
  \frac{m}{4m+1}.
\]
\end{corollary}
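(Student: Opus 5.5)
The plan is a dichotomy on the size of the radius excess over the slab $[0,1]^2\times[0,m]$, with the threshold $\beta\coloneqq m/(4m+1)$. The case $m=0$ is trivial, since $\exc_\sigma\geq0$ everywhere; so assume $m>0$. Suppose, for contradiction, that $\Rad(\sigma(\mathbf{c},r))-r<\beta$ for every $(\mathbf{c},r)$ in the slab. In particular every output on the slab is finite, because an infinite radius would give infinite excess. The basic inequality \eqref{eq:basic-center-excess} then gives
\[
  d_\infty(\gamma_r(\mathbf{c}),\mathbf{c})\leq\exc_\sigma(\mathbf{c},r)<\beta
\]
for all $\mathbf{c}\in[0,1]^2$ and all $r\in[0,m]$, and in particular on the lateral boundary $\partial([0,1]^2)$. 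So the hypotheses of \cref{thm:boundary-radial-growth} hold with $\delta=\beta$.

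Next I would apply \eqref{eq:bottom-point-excess} from that theorem. It yields a point $\mathbf{c}$ with
\[
  \Rad(\sigma(\mathbf{c},m))-m\geq m(1-4\beta).
\]
The choice of $\beta$ is exactly what makes this bound coincide with the threshold:
\[
  m(1-4\beta)=m\cdot\frac{4m+1-4m}{4m+1}=\frac{m}{4m+1}=\beta.
\]
This contradicts the assumption that the excess is strictly below $\beta$ at $(\mathbf{c},m)$. Hence some point $(\mathbf{c},r)$ of the slab has excess at least $\beta$, which is the statement of the corollary.

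The only delicate point is strict versus non-strict inequality. The contrapositive assumption gives a strict bound, while \cref{thm:boundary-radial-growth} only needs the non-strict $d_\infty\leq\delta$, so the choice $\delta=\beta$ is legitimate. One could equally avoid the contradiction by a direct case split. If some point of the slab has infinite output, or excess at least $\beta$, we are done. Otherwise every excess is below $\beta$, so we may take $\delta=\beta$ and the chain of bounds above produces a point with excess at least $\beta$. I do not expect any real obstacle; the substance was already carried out in \cref{thm:boundary-radial-growth}.
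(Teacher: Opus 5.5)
Your proposal is correct and is essentially the paper's own proof. The paper also assumes the excess is strictly below $m/(4m+1)$ on the slab and deduces that the outputs are finite. It then gets boundary control with $\delta=m/(4m+1)$ from \eqref{eq:basic-center-excess} and applies \cref{thm:boundary-radial-growth}, using the identity $m(1-4\delta)=\delta$ to reach the contradiction.
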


\begin{proof}
The case $m=0$ follows from the nonnegativity of the radius excess.
Assume $m>0$, and put
\[
  \alpha_m=\frac{m}{4m+1}.
\]
If the conclusion failed, then
\[
  \Rad(\sigma(\mathbf{c},r))-r<\alpha_m
  \qquad
  (\mathbf{c}\in[0,1]^2,\ 0\leq r\leq m).
\]
All outputs on the slab would therefore be finite, and
\eqref{eq:basic-center-excess} would give
\[
  d_\infty\bigl(
    \Cen(\sigma(\mathbf{c},r)),
    \mathbf{c}
  \bigr)
  \leq\alpha_m
\]
there.  By \cref{thm:boundary-radial-growth}, some point of the slice at
depth $m$ would then have radius excess at least
\[
  m(1-4\alpha_m)=\alpha_m,
\]
a contradiction.
\end{proof}

\section{No approximate identity of deflations}
\label{sec:main-conclusion}

We can now put the two parts of the argument together.
\Cref{thm:uniform-slab-approximation} says that an approximate identity
contains a member with uniformly small radius excess on any prescribed
finite slab.  By contrast, \cref{thm:radius-obstruction} gives a fixed
positive lower bound for the radius excess of every deflation somewhere
on that slab.

\begin{theorem}[Incompatibility of the two estimates]
\label{thm:no-deflation-approximate-identity}
The formal-square domain $\Sqbot$ carries no approximate identity
consisting of deflations.
\end{theorem}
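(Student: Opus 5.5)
The plan is to argue by contradiction, combining \cref{thm:uniform-slab-approximation} with \cref{thm:radius-obstruction}. Suppose $\mathcal A$ is an approximate identity on $\Sqbot$ consisting of deflations. Fix any $m>0$; for definiteness take $m=1$, so that \cref{thm:radius-obstruction} supplies the constant $\alpha_1=1/5$. Choose $\eps$ with $0<\eps<1/5$, say $\eps=1/10$.

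By \cref{thm:uniform-slab-approximation}, applied to $\mathcal A$ with $m=1$ and this $\eps$, there is $\sigma\in\mathcal A$ such that
\[
  \Rad(\sigma(\mathbf{c},r))-r\leq\eps<\tfrac15
\]
for every $(\mathbf{c},r)\in[0,1]^2\times[0,1]$. Since $\sigma$ is a deflation, \cref{thm:radius-obstruction} applies to it and produces $\mathbf{c}\in[0,1]^2$ and $r\in[0,1]$ with
\[
  \Rad(\sigma(\mathbf{c},r))-r\geq\tfrac15.
\]
These two bounds are incompatible. Hence no such $\mathcal A$ exists.

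The only point to check is that the hypotheses match exactly. \cref{thm:uniform-slab-approximation} needs only that $\mathcal A$ be an approximate identity; finiteness of image is not used there. \cref{thm:radius-obstruction} needs only that $\sigma$ be a single deflation, and it is unconditional: the boundary control and finiteness on the slab are derived inside its own proof. So no step should present a real obstacle, since all the analytic difficulty has already been absorbed into \cref{thm:boundary-radial-growth}. Finally, I would combine this with \cref{prop:RB-char} and \cref{prop:FS} to deduce \cref{thm:main-intro}: $\Sqbot$ is an FS-domain but not an RB-domain.
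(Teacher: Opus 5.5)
Your proposal is correct and follows the paper's proof: assume an approximate identity of deflations exists, use \cref{thm:uniform-slab-approximation} to get a member with radius excess at most $\eps<\frac{m}{4m+1}$ on the slab, and contradict \cref{thm:radius-obstruction}. The only difference is that you fix $m=1$ (so the constant is $\frac15$) where the paper keeps $m>0$ arbitrary, which makes no difference.
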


\begin{proof}
Suppose that $\mathcal A$ is an approximate identity on $\Sqbot$
consisting of deflations.  Fix $m>0$ and choose
\[
  0<\eps<\frac{m}{4m+1}.
\]
By \cref{thm:uniform-slab-approximation}, there is
$\sigma\in\mathcal A$ such that
\[
  0
  \leq
  \Rad(\sigma(\mathbf{c},r))-r
  \leq
  \eps
  \qquad
  (\mathbf{c}\in[0,1]^2,\ 0\leq r\leq m).
\]
Since $\sigma$ is a deflation, \cref{thm:radius-obstruction} gives
$\mathbf{c}\in[0,1]^2$ and $r\in[0,m]$ such that
\[
  \Rad(\sigma(\mathbf{c},r))-r
  \geq
  \frac{m}{4m+1}
  >
  \eps,
\]
contradicting the uniform radius-excess bound on the slab.
\end{proof}

\begin{theorem}\label{thm:not-RB}
The formal-square domain $\Sqbot$ is not an RB-domain.
\end{theorem}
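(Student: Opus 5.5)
The plan is to combine \cref{prop:RB-char} with \cref{thm:no-deflation-approximate-identity}; essentially no new mathematics is needed at this point.

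Suppose, towards a contradiction, that $\Sqbot$ is an RB-domain. By \cref{prop:way-below}, $\Sqbot$ is a pointed dcpo, so the hypotheses of \cref{prop:RB-char} are met. That characterization then yields an approximate identity $\mathcal A\subseteq[\Sqbot\to\Sqbot]$ consisting of deflations. This is exactly what \cref{thm:no-deflation-approximate-identity} rules out, giving the contradiction.

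The only point to check is that the notion of ``approximate identity of deflations'' in \cref{prop:RB-char} matches the one used in \cref{thm:no-deflation-approximate-identity}. Both are directed families of Scott-continuous maps with finite image, lying below the identity, with pointwise supremum $\id_{\Sqbot}$, so they agree.

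There is also an invariance issue: RB-domains are defined only up to isomorphism with a retract of a bifinite domain. The characterization is stated for the dcpo itself, and an order-isomorphism transports approximate identities of deflations. So it does not matter which isomorphic copy we use. There is no substantive obstacle here, since the main work was done in \cref{thm:uniform-slab-approximation} and \cref{thm:radius-obstruction}.

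Finally, combining this with \cref{prop:FS} shows that $\Sqbot$ is an FS-domain that is not an RB-domain. This is \cref{thm:main-intro}, because $\Sqbot$ is isomorphic to the pointed formal-ball domain of $([0,1]^2,d_\infty)$.
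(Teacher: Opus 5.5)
Your proposal is correct and is the paper's proof: if $\Sqbot$ were an RB-domain, \cref{prop:RB-char} would give an approximate identity of deflations, and \cref{thm:no-deflation-approximate-identity} rules this out. Your extra checks (pointedness, agreement of the two notions of approximate identity, invariance under isomorphism) are sound bookkeeping that the paper leaves implicit.
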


\begin{proof}
If $\Sqbot$ were an RB-domain, then
\cref{prop:RB-char} would give an approximate identity consisting of
deflations, contrary to
\cref{thm:no-deflation-approximate-identity}.
\end{proof}

\begin{proof}[Proof of \cref{thm:main-intro}]
Combine \cref{prop:FS,thm:not-RB}.
\end{proof}

\appendix

\section{One-dimensional parallel sets and the slicing estimate}
\label[appendix]{app:analysis}

This appendix supplies the technical facts used in
\cref{sec:analytic}.  Their order mirrors the logic of the proof: first a
one-dimensional parallel-set formula, then lower semicontinuity and
coarea, and finally the two-dimensional slicing estimate.

\subsection{Parallel sets in one dimension}

For a compact set $A\subseteq[0,1]$ and $s\geq0$, set
\[
  A_s=(A+[-s,s])\cap[0,1].
\]
Lebesgue measure is denoted by $\abs{\cdot}$.

\begin{lemma}[Exact one-dimensional parallel-set formula]
\label{lem:one-dimensional-parallel}
For every compact $A\subseteq[0,1]$ and every $\rho\geq0$,
\begin{equation}\label{eq:one-dimensional-exact}
  \abs{A_\rho}-\abs{A}
  =\int_0^\rho\Var(\one_{A_s})\dd s.
\end{equation}
Moreover,
\begin{equation}\label{eq:one-dimensional-limit}
  \Var(\one_A)
  =\lim_{\rho\downarrow0}
    \frac{\abs{A_\rho}-\abs{A}}{\rho},
\end{equation}
where the value $+\infty$ is allowed.
\end{lemma}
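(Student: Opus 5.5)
The plan is to prove the exact formula \eqref{eq:one-dimensional-exact} by differentiating $s\mapsto\abs{A_s}$ and identifying its right derivative with $\Var(\one_{A_s})$, and then to obtain \eqref{eq:one-dimensional-limit} from it.

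\textbf{Preliminary observation.} We may assume $A\neq\varnothing$; otherwise both sides vanish. For $s>0$ the set $A_s$ is a compact subset of $[0,1]$ that is a finite union of closed intervals. Indeed, $A+[-s,s]$ is a union of intervals of length $2s$ centred at points of $A$. By compactness it is covered by finitely many such intervals, and hence has finitely many components. For such a set,
\[
  \Var(\one_{A_s})=\#\{\text{endpoints of components lying in }(0,1)\},
\]
since each interior endpoint of a component contributes one jump of size $1$.

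\textbf{Step 1: the derivative of $s\mapsto\abs{A_s}$.} Note that $(A_s)_h=A_{s+h}$ for $s,h\ge 0$, because $d_\infty$ in one dimension is additive on the line, and centres and points both stay in $[0,1]$.

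Fix $s>0$ and write $A_s=\bigcup_{i=1}^k[a_i,b_i]$, with the intervals disjoint and separated by gaps of length $g>0$. For $0<h<g/2$, the set $A_{s+h}$ is obtained by extending each component by $h$ on every side that does not already lie at $0$ or $1$, truncated to $[0,1]$. No merging occurs because $h<g/2$. Hence
\[
  \abs{A_{s+h}}-\abs{A_s}=h\,\Var(\one_{A_s}).
\]
The same reasoning with a small left perturbation, together with monotonicity, shows that $\phi(s)=\abs{A_s}$ is continuous on $(0,\infty)$. It is also piecewise linear between the finitely many merge times in any interval $[s_0,\rho]$, since the number of components is nonincreasing in $s$ and merges decrease it. At $s=0$, continuity from the right holds because $A=\bigcap_{s>0}A_s$ for compact $A$, so $\abs{A_s}\downarrow\abs{A}$.

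\textbf{Step 2: integration.} On $[s_0,\rho]$, the function $\phi$ is continuous and has finitely many breakpoints, with right derivative $\Var(\one_{A_s})$. Therefore
\[
  \phi(\rho)-\phi(s_0)=\int_{s_0}^\rho\Var(\one_{A_s})\dd s.
\]
Letting $s_0\downarrow0$, the left side tends to $\abs{A_\rho}-\abs{A}$ by the continuity just noted, and the right side converges by monotone convergence. This proves \eqref{eq:one-dimensional-exact}.

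\textbf{Step 3: the limit formula.} For \eqref{eq:one-dimensional-limit}, it suffices by \eqref{eq:one-dimensional-exact} to show that $\Var(\one_{A_s})\to\Var(\one_A)$ as $s\downarrow0$. Then the averages $\frac1\rho\int_0^\rho\Var(\one_{A_s})\dd s$ converge to the same limit, including the value $+\infty$.

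\emph{Upper bound.} Every interior boundary point of $A_s$ is within distance $s$ of a boundary transition of $A$. A cleaner way to see the bound is that each jump of $\one_{A_s}$ can be charged to a distinct jump of $\one_A$, which gives $\Var(\one_{A_s})\le\Var(\one_A)$.

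\emph{Lower bound.} Given a partition $t_0<\dots<t_n$ witnessing a variation sum $V$ for $\one_A$, shrink the points $t_j\notin A$ slightly if necessary. Because the complement of $A$ is open, for small $s$ each $t_j\notin A$ also lies outside $A_s$, while points of $A$ remain in $A_s$. So the same partition gives $\Var(\one_{A_s})\ge V$. Hence $\liminf_{s\downarrow0}\Var(\one_{A_s})\ge\Var(\one_A)$, and in fact the lower semicontinuity alone yields the limit together with the upper bound.

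\textbf{Main obstacle.} The hard point is the upper bound $\Var(\one_{A_s})\le\Var(\one_A)$ when $A$ is an arbitrary compact set, possibly with infinitely many components. I would try to prove it by the following injection. Each component of $A_s$ contains a point of $A$. Each gap of $A_s$ inside $(0,1)$ contains a gap of $A$, namely a point outside $A$ flanked by points of $A$. So an alternating partition for $\one_{A_s}$ lifts to an alternating partition for $\one_A$ with at least as many jumps. Once this is established, $\Var(\one_{A_s})$ is monotone in $s$, and the limit follows immediately.
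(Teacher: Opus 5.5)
Your proof is correct, but it takes a different route from the paper.

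\textbf{The paper's route.} The paper works statically with the countably many gaps of $[0,1]\setminus A$. It proves $\abs{A_s}-\abs{A}=\sum_{G\in\mathcal G_i}\min(2s,\abs{G})+\sum_{G\in\mathcal G_b}\min(s,\abs{G})$ and $\Var(\one_{A_s})=2\#\{G\in\mathcal G_i\mid \abs{G}>2s\}+\#\{G\in\mathcal G_b\mid \abs{G}>s\}$. The exact formula is then a single application of Tonelli. The limit formula is monotone convergence of $\min(2,\abs{G}/s)$ and $\min(1,\abs{G}/s)$. Monotonicity and convergence of $\Var(\one_{A_s})$ can be read off directly from the explicit count.

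\textbf{Your route.} You argue dynamically. For $s>0$ you only handle finite unions of intervals, and you compute the right derivative of $s\mapsto\abs{A_s}$ using the semigroup property $(A_s)_h=A_{s+h}$. You then integrate on $[s_0,\rho]$, so that the arbitrary compact set enters only through $\abs{A_{s_0}}\downarrow\abs{A}$. For the limit formula you squeeze $\Var(\one_{A_s})$ between two bounds:
\begin{itemize}
\item the partition-based lower semicontinuity, which is the same device as the paper's \cref{lem:lsc-variation};
\item the upper bound $\Var(\one_{A_s})\leq\Var(\one_A)$.
\end{itemize}
Your injection for the upper bound is valid as stated: every component of $A_s$ contains a point of $A$, and every gap of $A_s$ misses $A$. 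So the step you flag as the main obstacle is already settled.

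\textbf{Trade-offs.} Your approach avoids bookkeeping over countably many gaps. The price is an ODE-type step (a continuous function whose right derivative is a finite step function is piecewise linear) and a separate monotonicity argument. The paper's approach is shorter, and its explicit gap formulas give both statements at once.

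\textbf{Details to tighten.} These are routine:
\begin{itemize}
\item Finiteness of the number of components of $A_s$ is best justified by noting that each component contains $[a-s,a+s]\cap[0,1]$ for some $a\in A$, so it has length at least $\min(s,1)$.
\item In Step~1, $h$ must also be smaller than the boundary gaps of $A_s$, not just half the internal ones. Otherwise the identity $\abs{A_{s+h}}-\abs{A_s}=h\Var(\one_{A_s})$ fails because of truncation.
\item The breakpoints of $\phi$ include the times when an endpoint reaches $0$ or $1$, not only merge times. There are still finitely many, since the number of interior endpoints is a nonincreasing integer.
\end{itemize}
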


\begin{proof}
The case $A=\varnothing$ is immediate.  Assume $A\neq\varnothing$.  The
relative complement $[0,1]\setminus A$ is a countable disjoint union of
relative open intervals.  Call a component $(a,b)$ an internal gap; the
possible components $[0,b)$ and $(a,1]$ are boundary gaps.  If a gap has
length $\ell$, enlargement by $s$ removes length $\min(2s,\ell)$ from an
internal gap and length $\min(s,\ell)$ from a boundary gap.  Hence
\begin{equation}\label{eq:gap-growth}
  \abs{A_s}-\abs{A}
  =
  \sum_{G\in\mathcal G_i}\min(2s,\abs{G})
  +
  \sum_{G\in\mathcal G_b}\min(s,\abs{G}),
\end{equation}
where $\mathcal G_i$ and $\mathcal G_b$ are the internal and boundary
gaps.

For fixed $s>0$, an internal gap with $\abs{G}>2s$ contributes two jumps
to $\one_{A_s}$, and a boundary gap with $\abs{G}>s$ contributes one.
Thus
\[
  \Var(\one_{A_s})
  =
  2\#\{G\in\mathcal G_i \mid \abs{G}>2s\}
  +
  \#\{G\in\mathcal G_b \mid \abs{G}>s\}.
\]
Tonelli's theorem for nonnegative series gives
\begin{align*}
  \int_0^\rho\Var(\one_{A_s})\dd s
  &=
  \sum_{G\in\mathcal G_i}
     \int_0^\rho2\one_{\{\abs{G}>2s\}}\dd s
  +
  \sum_{G\in\mathcal G_b}
     \int_0^\rho\one_{\{\abs{G}>s\}}\dd s\\
  &=
  \sum_{G\in\mathcal G_i}\min(2\rho,\abs{G})
  +
  \sum_{G\in\mathcal G_b}\min(\rho,\abs{G}),
\end{align*}
which is \eqref{eq:one-dimensional-exact} by
\eqref{eq:gap-growth}.

Dividing \eqref{eq:gap-growth} by $s$ gives
\[
  \frac{\abs{A_s}-\abs{A}}s
  =
  \sum_{G\in\mathcal G_i}\min(2,\abs{G}/s)
  +
  \sum_{G\in\mathcal G_b}\min(1,\abs{G}/s).
\]
As $s\downarrow0$, the summands increase to $2$ for each internal gap and
$1$ for each boundary gap.  Their sum is exactly
$\Var(\one_A)$, possibly infinite.  Monotone convergence proves
\eqref{eq:one-dimensional-limit}.
\end{proof}

\begin{lemma}[Lower semicontinuity under decreasing compact approximation]
\label{lem:lsc-variation}
Let $A_1\supseteq A_2\supseteq\cdots$ be compact subsets of $[0,1]$ and
let $A=\bigcap_nA_n$.  Then
\[
  \Var(\one_A)
  \leq\liminf_{n\to\infty}\Var(\one_{A_n}).
\]
The same conclusion holds for a decreasing family $A_\eps\downarrow A$
as $\eps\downarrow0$.
\end{lemma}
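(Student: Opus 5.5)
We prove the lemma through the gap structure of compact subsets of $[0,1]$, as in the proof of \cref{lem:one-dimensional-parallel}. For a compact $B\subseteq[0,1]$, $\Var(\one_B)$ counts the boundary crossings of $B$. If $B$ is nonempty, each internal gap (a component $(a,b)$ of $[0,1]\setminus B$ with $0<a<b<1$) contributes $2$, and each boundary gap contributes $1$. The empty set has variation $0$. We will compare the gaps of $A$ with those of $A_n$.

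If $A=\varnothing$ there is nothing to prove. Otherwise every $A_n$ is nonempty. Fix finitely many gaps $G_1,\dots,G_k$ of $A$, each given its weight $w_j\in\{1,2\}$. We claim that for all large $n$, the set $A_n$ has distinct gaps $G_1^{(n)},\dots,G_k^{(n)}$ with the same weights. This yields $\liminf_n\Var(\one_{A_n})\geq\sum_j w_j$. Taking the supremum over finite families of gaps of $A$ gives $\Var(\one_A)$, which covers the infinite case too.

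To prove the claim, take an internal gap $G_j=(a,b)$ and choose a compact interval $[a',b']\subseteq(a,b)$ with $a'<b'$. The sets $A_n\cap[a',b']$ are decreasing compacts with empty intersection. By compactness (the finite intersection property), $A_n\cap[a',b']=\varnothing$ for large $n$. So $[a',b']$ lies in a single gap $G_j^{(n)}$ of $A_n$. That gap is internal because $a,b\in A\subseteq A_n$ lie on either side of it. Since the points $a$, $b$ of $A_n$ separate the chosen intervals, the gaps are distinct for distinct $j$. Boundary gaps $[0,b)$ are handled the same way with $[0,b']$, $b'<b$: the resulting gap of $A_n$ contains $0$ and is bounded on the right by $b\in A_n$, so it is a boundary gap. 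The same holds at $1$. Only finitely many $n$ are excluded, so the claim holds.

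For a family indexed by $\eps\downarrow0$, apply the sequence version along an arbitrary sequence $\eps_n\downarrow0$. The set $\bigcap_n A_{\eps_n}$ equals $A$ because the family is monotone, so the $\liminf$ bound holds along every such sequence, which is the one-sided $\liminf$. The only real obstacle is the compactness step showing that gaps of $A$ persist in $A_n$. Everything else is bookkeeping of the gap weights.
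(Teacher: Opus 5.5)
Your proof is correct, but it takes a different route from the paper.

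The paper argues directly from the definition of $\Var$ as a supremum of partition sums. Since $\one_{A_n}\to\one_A$ pointwise (a point outside $A$ lies outside $A_n$ for all large $n$), each fixed partition sum of $\one_A$ is the limit of the corresponding partition sums of $\one_{A_n}$. Each of those is at most $\Var(\one_{A_n})$, and taking the supremum over partitions finishes. This is just lower semicontinuity of total variation under pointwise convergence, and it uses neither compactness nor any structure of the sets.

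Your argument uses closedness twice instead:
\begin{itemize}
\item to identify $\Var(\one_B)$ with the weighted gap count, a fact the paper only asserts inside the proof of \cref{lem:one-dimensional-parallel}, so you are relying on it too;
\item together with the finite intersection property, to show that for all large $n$ each gap of $A$ contains a gap of $A_n$ of the same type.
\end{itemize}
Distinctness is immediate, because each component of $[0,1]\setminus A_n$ lies inside a component of $[0,1]\setminus A$.

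Your version gives a geometric picture consistent with the gap bookkeeping of \cref{lem:one-dimensional-parallel}: gaps of $A$ persist in $A_n$ for all large $n$. The cost is length and dependence on compactness and the gap formula. The paper's two-line argument works for arbitrary decreasing sets.

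One small slip: an internal gap should be a component $(a,b)$ with $a,b\in B$ and $0\le a<b\le 1$, not $0<a<b<1$. For instance, $B=\{0,1\}$ has the internal gap $(0,1)$ of weight $2$. Your persistence argument handles this case verbatim, so nothing breaks.
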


\begin{proof}
The indicators $\one_{A_n}$ converge pointwise to $\one_A$.  For every
finite partition $0=t_0<\cdots<t_k=1$,
\begin{align*}
  \sum_{j=1}^k
  \abs{\one_A(t_j)-\one_A(t_{j-1})}
  &=
  \lim_{n\to\infty}
  \sum_{j=1}^k
  \abs{\one_{A_n}(t_j)-\one_{A_n}(t_{j-1})}\\
  &\leq
  \liminf_{n\to\infty}\Var(\one_{A_n}).
\end{align*}
Take the supremum over partitions.  The one-parameter statement follows
by applying the sequential result to arbitrary sequences
$\eps_n\downarrow0$.
\end{proof}

\subsection{Coarea for finite-image functions}\label{app:coarea}

\begin{lemma}[One-dimensional coarea]
\label{lem:coarea}
If $g \colon [0,1]\to\R$ has finite image, then
\[
  \int_\R\Var(\one_{\{g\geq t\}})\dd t=\Var(g),
\]
with the convention that both sides may be infinite.
\end{lemma}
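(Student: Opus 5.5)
The plan is to compute both sides explicitly in terms of the finitely many values of $g$ and compare them term by term along partitions. Let $g$ have image $\{v_1<\cdots<v_k\}$. For $t$ outside $(v_1,v_k]$ the superlevel set $\{g\geq t\}$ is either all of $[0,1]$ or empty. In both cases its indicator has variation $0$, so the integral reduces to
\[
  \sum_{i=2}^{k}(v_i-v_{i-1})\,\Var(\one_{\{g\geq v_i\}}).
\]
This holds because $\{g\geq t\}=\{g\geq v_i\}$ for every $t\in(v_{i-1},v_i]$. The left-hand side is thus a finite positive combination of variations, and measurability in $t$ is trivial because the integrand is a step function.

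The key pointwise identity is that for any two reals $a,b$ in the image,
\[
  \abs{a-b}=\int_\R\abs{\one_{\{a\geq t\}}-\one_{\{b\geq t\}}}\dd t,
\]
since the integrand is the indicator of the interval between $\min(a,b)$ and $\max(a,b)$, half-open. Summing over consecutive points of a fixed partition $P$ gives
\[
  V_P(g)=\int_\R V_P(\one_{\{g\geq t\}})\dd t.
\]

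For the inequality $\Var(g)\leq\int\Var(\one_{\{g\geq t\}})\dd t$, bound each $V_P(\one_{\{g\geq t\}})$ by $\Var(\one_{\{g\geq t\}})$ and take the supremum over $P$. For the reverse inequality, fix $\eta$ below the right-hand side. Since the integral is a finite sum over $i$, for each $i$ choose a partition $P_i$ with $V_{P_i}(\one_{\{g\geq v_i\}})$ close to, or arbitrarily large towards, $\Var(\one_{\{g\geq v_i\}})$. Let $P$ be the common refinement of the $P_i$. Partition sums only increase under refinement, so for every $t\in(v_{i-1},v_i]$ we have
\[
  V_P(\one_{\{g\geq t\}})\geq V_{P_i}(\one_{\{g\geq v_i\}}).
\]
Integrating then gives $\Var(g)\geq V_P(g)>\eta$. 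This also handles the infinite case: if some $\Var(\one_{\{g\geq v_i\}})=\infty$, the corresponding weight $v_i-v_{i-1}$ is positive, so $\Var(g)=\infty$ too.

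The only delicate step is this interchange of supremum over partitions with the integral in $t$. A general monotone-convergence argument would need care, but finiteness of the image reduces it to a finite sum, and a single common refinement handles all levels simultaneously. So I expect no real obstacle beyond bookkeeping of the infinite cases.
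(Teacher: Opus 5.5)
Your proof is correct and follows essentially the same route as the paper. Both arguments use the pointwise identity $\abs{a-b}=\int_\R\abs{\one_{\{a\geq t\}}-\one_{\{b\geq t\}}}\dd t$ to get $V_P(g)=\int_\R V_P(\one_{\{g\geq t\}})\dd t$ for the easy inequality. For the reverse inequality, both reduce the integral to the finite weighted sum $\sum_i (v_i-v_{i-1})\Var(\one_{\{g\geq v_i\}})$, take a common refinement of near-optimal partitions for the finitely many levels, and handle the case of an infinite level variation separately.
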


\begin{proof}
For a finite partition $P:0=x_0<\cdots<x_n=1$, write
\[
  V_P(g)=\sum_{j=1}^n\abs{g(x_j)-g(x_{j-1})}.
\]
The elementary identity
\[
  \abs{a-b}
  =\int_\R\abs{\one_{\{a\geq t\}}-\one_{\{b\geq t\}}}\dd t
\]
gives
\[
  V_P(g)
  =\int_\R V_P(\one_{\{g\geq t\}})\dd t
  \leq\int_\R\Var(\one_{\{g\geq t\}})\dd t.
\]
Taking the supremum over $P$ proves one inequality.

Let the distinct values of $g$ be $a_1<\cdots<a_m$.  For
$1\leq i<m$, put
\[
  h_i=\one_{\{g\geq a_{i+1}\}},
  \qquad
  \Delta_i=a_{i+1}-a_i.
\]
For every partition $P$, the layer decomposition of the finitely many
values gives
\[
  V_P(g)=\sum_{i=1}^{m-1}\Delta_i V_P(h_i),
\]
and, up to endpoints of the level intervals,
\[
  \int_\R\Var(\one_{\{g\geq t\}})\dd t
  =\sum_{i=1}^{m-1}\Delta_i\Var(h_i).
\]
If all these variations are finite, choose for each $i$ a partition
whose $h_i$-variation is arbitrarily close to $\Var(h_i)$ and take a
common refinement.  The preceding identity for $V_P(g)$ then shows
\[
  \Var(g)\geq\sum_{i=1}^{m-1}\Delta_i\Var(h_i).
\]
If some $\Var(h_i)$ is infinite, partitions with arbitrarily large
$h_i$-variation instead show that $\Var(g)=\infty$.  This proves the
reverse inequality in all cases.
\end{proof}

\subsection{Measurability of section variations}\label{app:measurability}

\begin{lemma}[Joint measurability of section variations]
\label{lem:section-measurability}
Let $F\subseteq[0,1]^2$ be compact and, for $a,b\geq0$, put
\[
  F_{a,b}=\bigl(F+[-a,a]\times[-b,b]\bigr)\cap[0,1]^2.
\]
Then the maps
\[
  (a,b,y)\longmapsto\Var(\one_{(F_{a,b})^y}),
  \qquad
  (a,b,x)\longmapsto\Var(\one_{(F_{a,b})_x})
\]
from $[0,\infty)^2\times[0,1]$ to $[0,\infty]$ are Borel measurable.
In particular, the section-variation functions of a fixed compact set
are measurable.
\end{lemma}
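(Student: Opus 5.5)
The plan is to reduce both maps to countable suprema of Borel functions.

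The first step rewrites the variation of an indicator as a supremum over rational partitions. For a compact set $K\subseteq[0,1]$, the variation $\Var(\one_K)$ counts the boundary transitions of $K$. Since $K$ is closed, I expect it to equal the supremum over partitions whose interior points are rational, provided the endpoints $0$ and $1$ are handled separately. The point to check is that a transition into a closed set can be detected from one side by rationals. If $x\in K$ and points just to the right of $x$ lie outside $K$, then nearby rationals also lie outside $K$, because the complement is open. It is not automatic, however, that $x$ itself can be replaced by a rational point of $K$, since $K$ might have empty interior, for instance a single point.

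To avoid this difficulty I will express the variation through enlargements instead. For $\eta>0$, the number of components of $[0,1]\setminus K$ of length greater than $\eta$ is determined by rational test points, because each such gap contains rationals. By the gap description used in \cref{lem:one-dimensional-parallel}, the variation equals
\[
  \lim_{\eta\downarrow0}\Bigl(2\#\{\text{internal gaps of length}>\eta\}+\#\{\text{boundary gaps of length}>\eta\}\Bigr).
\]
Counting the gaps of length greater than $\eta$ is in turn a supremum, over finite families of rational intervals, of indicator conditions of the form ``the rational interval $[p,q]$ with $q-p>\eta$ misses $K$, and consecutive such intervals are separated by points of $K$''.

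The condition that an interval misses $K$ is open in the data, so each such condition is Borel. The condition that two consecutive intervals are separated by a point of $K$ needs more care. One option is to say that the gap between them meets $K$, which is a closed condition. Alternatively, one can observe that distinct gaps are exactly maximal intervals avoiding $K$, and write the separation as ``$K\cap[q_j,p_{j+1}]\neq\varnothing$''.

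The second step applies this to $K=(F_{a,b})^y$. The conditions become:
\begin{enumerate}
  \item ``$(F_{a,b})^y\cap[p,q]=\varnothing$'', which is equivalent to $(F+[-a,a]\times[-b,b])\cap([p,q]\times\{y\})=\varnothing$;
  \item ``$(F_{a,b})^y\cap[p',q']\neq\varnothing$''.
\end{enumerate}
The set of $(a,b,y)$ for which a compact set $F+[-a,a]\times[-b,b]$, depending continuously on $(a,b)$ in the Hausdorff sense, meets the compact set $[p',q']\times\{y\}$ is closed. This follows because the intersection condition is closed under limits by compactness. Hence condition (2) defines a closed set of parameters, and condition (1) defines an open one.

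The variation is therefore a countable supremum of finite sums of indicators of Borel sets, with suprema taken over rationals and limits in $\eta$ taken along $\eta=1/k$. This makes the first map Borel. The second map follows by symmetry, exchanging the roles of the coordinates. The ``in particular'' clause is the case $a=b=0$.

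The main obstacle is the combinatorial identity between the variation of $\one_K$ and the rational gap-counting expression. In particular, the boundary gaps $[0,p)$ and $(q,1]$ must be counted with weight one, and gaps of length exactly $\eta$ need attention in the limit. I would handle the latter by using strict inequality $q-p>\eta$ and monotonicity in $\eta$.
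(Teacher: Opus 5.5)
Your proof is correct, but it takes a different route from the paper.

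The paper does not count gaps at all. It sets $H(a,b,y,s)=\abs{((F_{a,b})^y)_s}$ and notes that the set of all $(\xi,a,b,y,s)$ admitting witnesses $(u,v)\in F$ and $w\in[0,1]$ with $\abs{w-u}\leq a$, $\abs{y-v}\leq b$, $\abs{\xi-w}\leq s$ is closed, by compactness of $F\times[0,1]$. Integrating this indicator in $\xi$ shows that $H$ is Borel. The variation is then the pointwise limit $\lim_{n}n\bigl(H(a,b,y,1/n)-H(a,b,y,0)\bigr)$, by \eqref{eq:one-dimensional-limit}.

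That argument is short because it reuses \cref{lem:one-dimensional-parallel} and a Tonelli-type measurability fact. Yours needs only the gap description and the same compactness observation. It exhibits the variation directly as a countable supremum of finite combinations of indicators of open and closed parameter sets. This is more elementary and gives an explicit, low Borel complexity.

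Two points in your sketch should be made explicit.

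\emph{Weights.} Give the gap containing a rational interval $[p,q]$ a weight equal to the number of the conditions $K\cap[0,p]\neq\varnothing$ and $K\cap[q,1]\neq\varnothing$ that hold. Both conditions are closed in $(a,b,y)$: they are instances of your condition (2). This weight is $2$ on internal gaps and $1$ on boundary gaps. It is $0$ when $K=\varnothing$, which also handles the empty section: there all of $[0,1]$ avoids $K$, yet $\Var(\one_K)=0$, so a count without this weighting would return the wrong value.

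\emph{The parameter $\eta$.} It is harmless but unnecessary. Every gap contains a rational point, since boundary gaps contain $0$ or $1$. So the supremum over all finite ordered families of rational intervals that miss $K$, with consecutive ones separated by points of $K$, of the summed weights already equals $\Var(\one_K)$, possibly $+\infty$.
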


\begin{proof}
For $s\geq0$, define
\[
  H(a,b,y,s)=\abs{((F_{a,b})^y)_s}.
\]
The incidence set of all $(\xi,a,b,y,s)$ for which there exist
$(u,v)\in F$ and $w\in[0,1]$ satisfying
\[
  \abs{w-u}\leq a,
  \qquad
  \abs{y-v}\leq b,
  \qquad
  \abs{\xi-w}\leq s
\]
is closed: witnesses can be passed to a convergent subsequence in the
compact set $F\times[0,1]$.  Its $(a,b,y,s)$-section in the variable
$\xi$ is exactly $((F_{a,b})^y)_s$.  Integrating its Borel indicator in
$\xi$ shows that $H$ is Borel measurable.  By
\eqref{eq:one-dimensional-limit},
\[
  \Var(\one_{(F_{a,b})^y})
  =\lim_{n\to\infty}
  n\bigl(H(a,b,y,1/n)-H(a,b,y,0)\bigr),
\]
so the first map is measurable.  Interchanging the two coordinates gives
the second map.
\end{proof}

\subsection{The two-dimensional slicing estimate}
\label{app:minkowski-proof}

For $a,b\geq0$, set
\[
  E_{a,b}
  =\bigl(E+[-a,a]\times[-b,b]\bigr)\cap[0,1]^2.
\]
Thus $E_\eps^{[0,1]^2}=E_{\eps,\eps}$.

\begin{proof}[Proof of \cref{prop:minkowski-slicing}]
Decompose the square enlargement into a horizontal enlargement followed
by a vertical enlargement:
\begin{equation}\label{eq:enlargement-decomposition}
  \abs{E_{\eps,\eps}}-\abs{E}
  =
  \bigl(\abs{E_{\eps,0}}-\abs{E}\bigr)
  +
  \bigl(\abs{E_{\eps,\eps}}-\abs{E_{\eps,0}}\bigr).
\end{equation}
We estimate the two terms separately.

For every $y\in[0,1]$,
\[
  (E_{\eps,0})^y=(E^y)_\eps.
\]
Fubini and \cref{lem:one-dimensional-parallel} give
\begin{align*}
  \abs{E_{\eps,0}}-\abs{E}
  &=
  \int_0^1
  \bigl(\abs{(E^y)_\eps}-\abs{E^y}\bigr)\dd y\\
  &=
  \int_0^1\int_0^\eps
  \Var(\one_{(E^y)_s})\dd s\dd y.
\end{align*}
After the change of variables $s=\tau\eps$,
\[
  \frac{\abs{E_{\eps,0}}-\abs{E}}\eps
  =
  \int_0^1\int_0^1
  \Var(\one_{(E^y)_{\tau\eps}})\dd\tau\dd y.
\]
For fixed $y$ and $\tau$, the compact sets
$(E^y)_{\tau\eps}$ decrease to $E^y$ as $\eps\downarrow0$.
By \cref{lem:lsc-variation} and Fatou's lemma,
\begin{equation}\label{eq:horizontal-slicing}
  \liminf_{\eps\downarrow0}
  \frac{\abs{E_{\eps,0}}-\abs{E}}\eps
  \geq
  \int_0^1\Var(\one_{E^y})\dd y.
\end{equation}
The measurability needed here is supplied by
\cref{lem:section-measurability}.

For the second term, fix $x\in[0,1]$.  Enlarging the vertical section of
$E_{\eps,0}$ by $s$ gives
\[
  ((E_{\eps,0})_x)_s=(E_{\eps,s})_x.
\]
Fubini and \cref{lem:one-dimensional-parallel} now give
\begin{align*}
  \abs{E_{\eps,\eps}}-\abs{E_{\eps,0}}
  &=
  \int_0^1
  \bigl(\abs{(E_{\eps,\eps})_x}
       -\abs{(E_{\eps,0})_x}\bigr)\dd x\\
  &=
  \int_0^1\int_0^\eps
  \Var(\one_{(E_{\eps,s})_x})\dd s\dd x.
\end{align*}
With $s=\tau\eps$,
\[
  \frac{\abs{E_{\eps,\eps}}-\abs{E_{\eps,0}}}\eps
  =
  \int_0^1\int_0^1
  \Var(\one_{(E_{\eps,\tau\eps})_x})\dd\tau\dd x.
\]
For fixed $x$ and $\tau$, the compact sets
$(E_{\eps,\tau\eps})_x$ decrease to $E_x$ as $\eps\downarrow0$.  The
inclusion $E_x\subseteq(E_{\eps,\tau\eps})_x$ is immediate.  Conversely,
if $y$ belongs to all these sections, choose $\eps_n\downarrow0$ and
$(x_n,y_n)\in E$ such that
\[
  \abs{x_n-x}\leq\eps_n,
  \qquad
  \abs{y_n-y}\leq\tau\eps_n.
\]
Then $(x_n,y_n)\to(x,y)$, and compactness of $E$ gives $(x,y)\in E$.
Thus $y\in E_x$.  Another application of
\cref{lem:lsc-variation}, Fatou's lemma, and
\cref{lem:section-measurability} yields
\begin{equation}\label{eq:vertical-slicing}
  \liminf_{\eps\downarrow0}
  \frac{\abs{E_{\eps,\eps}}-\abs{E_{\eps,0}}}\eps
  \geq
  \int_0^1\Var(\one_{E_x})\dd x.
\end{equation}
Combining \eqref{eq:enlargement-decomposition},
\eqref{eq:horizontal-slicing}, and \eqref{eq:vertical-slicing} proves
\eqref{eq:minkowski-slicing-main}.
\end{proof}

\section*{Acknowledgments}
The author used \mbox{ChatGPT} (OpenAI) as a research and writing assistant during the development of this manuscript. 
I had been developing the underlying proof strategy for some time and, on
30 April 2026, asked ChatGPT to help work out the most technically demanding
analytic parts.  Through a back-and-forth discussion, the remaining gaps
were filled and a complete proof emerged on the same date.

The choice of the formal-square model for this problem, the central
bounded-variation strategy, and the overall architecture of the argument
originated with the author.
\mbox{ChatGPT} provided substantial assistance in proving the relative anisotropic Minkowski-slicing estimate (\cref{prop:minkowski-slicing}), in working out and stress-testing technical details, in identifying and repairing gaps in intermediate arguments, and improving the organization and exposition of the manuscript. The author assumes full responsibility for the contents.

\begingroup
\renewcommand{\bibliofont}{\small}
\bibliographystyle{alpha}
\bibliography{FS_vs_RB}

@Article{Lawson2008,
  author     = {Lawson, Jimmie D.},
  journal    = {Theoret. Comput. Sci.},
  title      = {Metric spaces and {$FS$}-domains},
  year       = {2008},
  number     = {1--2},
  pages      = {73--74},
  volume     = {405},
  doi        = {10.1016/j.tcs.2008.06.026}
}

@InCollection{Jung1990,
  author    = {Jung, Achim},
  booktitle = {Fifth Annual IEEE Symposium on Logic in Computer Science},
  publisher = {IEEE Computer Society Press},
  title     = {The classification of continuous domains (extended abstract)},
  year      = {1990},
  pages     = {35--40},
  doi       = {10.1109/LICS.1990.113731}
}

@Misc{ChenCone2026,
  author        = {Chen, Yuxu},
  title         = {Cone domains separate {FS}-domains from {RB}-domains},
  year          = {2026},
  archiveprefix = {arXiv},
  eprint        = {2607.02251},
  primaryclass  = {math.GN},
  note          = {arXiv:2607.02251}
}

@InCollection{AbramskyJung1994,
  author    = {Abramsky, Samson and Jung, Achim},
  booktitle = {Handbook of Logic in Computer Science},
  publisher = {Oxford University Press},
  title     = {Domain theory},
  year      = {1994},
  pages     = {1--168},
  volume    = {3}
}

@Book{GierzHofmannEtAl2003,
  author    = {Gierz, G. and Hofmann, K. H. and Keimel, K. and Lawson, J. D. and Mislove, M. and Scott, D. S.},
  publisher = {Cambridge University Press},
  title     = {Continuous Lattices and Domains},
  year      = {2003},
  series    = {Encyclopedia of Mathematics and its Applications},
  volume    = {93},
  doi       = {10.1017/CBO9780511542725}
}

@Article{ZouLiGuo2018,
  author  = {Zou, Z. and Li, Q. and Guo, L.},
  journal = {Categories and General Algebraic Structures with Applications},
  title   = {A note on the problem when {FS}-domains coincide with {RB}-domains},
  year    = {2018},
  number  = {1},
  pages   = {51--59},
  volume  = {8},
  doi     = {10.29252/cgasa.8.1.51}
}

@Misc{ChenKouLyu2026,
  author        = {Chen, Yuxu and Kou, Hui and Lyu, Zhenchao},
  title         = {{FS-domains are not always RB-domains}},
  year          = {2026},
  archiveprefix = {arXiv},
  eprint        = {2607.00568},
  primaryclass  = {math.GN},
  note          = {arXiv:2607.00568}
}
\endgroup

\end{document}